\newtheorem{theorem}{Theorem}
\newcommand{\beq}{\begin{eqnarray}}
\newcommand{\eeq}{\end{eqnarray}}
\newcommand{\R}{\mathbb{R}}
\newcommand{\tr}{\mathrm{tr}}
\newcommand{\PP}{\mathcal{P}}
\newcommand{\ric}{\mathrm{Ric}}
\newcommand{\II}{\mathscr{I}}
\newcommand{\ii}{{\bf I}}
\newcommand{\ff}{\mathscr{G}}
\newcommand{\mP}{\mathcal{P}}
\newcommand{\EE}{\mathscr{E}}
\newcommand{\vol}{\mathrm{vol}}
\newtheorem{thmx}{Theorem}
\newcommand{\LL}{\mathrm{II}}
\newtheorem{proposition}[theorem]{Proposition}
\newtheorem{lemma}[theorem]{Lemma}
\newtheorem{corollary}[theorem]{Corollary}
\newtheorem{remark}[theorem]{Remark}
\newtheorem{convention}[theorem]{Convention}
\numberwithin{equation}{section}
\numberwithin{theorem}{section}
\title{Sobolev-trace inequalities of order four}
\author{Antonio G. Ache}
\address{Mathematics Department, Princeton University, Fine Hall, Washington Road,
Princeton New Jersey 08544-1000 USA }
\email{aache@math.princeton.edu}
\author{Sun-Yung Alice Chang}
\address{Mathematics Department, Princeton University, Fine Hall, Washington Road,
Princeton New Jersey 08544-1000 USA }
\email{chang@math.princeton.edu}
\thanks{The first author was supported by a postdoctoral fellowship of the
National Science Foundation, award No. DMS-1204742.}
\thanks{The second author was supported by NSF grant DMS-1104536}
\begin{document}

\maketitle
\bibliographystyle{amsalpha}

\begin{abstract} We establish sharp trace Sobolev inequalities of order four on Euclidean $d$-balls for $d\ge 4$. When $d=4$, our inequality generalizes the classical second order Lebedev-Milin inequality on Euclidean $2$-balls. Our method relies on the use of scattering theory on hyperbolic $d$-balls. As an application, we characterize extremals of the main term  in the log-determinant formula corresponding to the conformal Laplacian coupled with the boundary Robin operator on Euclidean $4$-balls.  
\end{abstract}

\section{Introduction} In this paper we derive some sharp Sobolev trace
inequalities on Euclidean $d$-balls of order 4 with the $(d-1)$-sphere as boundary. 
Sobolev inequalities for critical exponents play an important role in 
problems in conformal geometry.  One example is the sharp Sobolev inequality of order
2 on the standard sphere $(S^n, g_{S^n})$ for $ n>2$ which takes the form
\begin{align} \label{sob-n}
\left(\frac{1}{\vol(S^{n})}\oint_{S^n}|f|^{q}d\sigma\right)^{2/q}\le\frac{q-2}{n\cdot\vol(S^{n})}\oint_{S^{n}}|\tilde{\nabla}f|^2d\sigma+\frac{1}{\vol(S^{n})}\oint_{S^{n}}|f|^2d\sigma,
\end{align}
where $d\sigma$ is the Lebesgue measure induced by $g_{S^{n}}$, $\vol(S^{n})$ is the volume of $S^{n}$ with respect to $g_{S^{n}}$, $\tilde{\nabla}$ is the gradient 
operator on $(S^n,g_{S^n})$ and $q=\frac{2n}{n-2}$.   
Inequality \eqref{sob-n} has been crucial in the study of the Yamabe problem on closed manifolds. We remark that when $n=2$, \eqref{sob-n} takes the form of
Moser-Onfori inequality
\begin{align}\label{onofri}
\log\left( \frac{1}{4\pi}\oint_{S^{2}}e^{f}d\sigma\right)\le\frac{1}{4\pi}\oint_{S^2}fd\sigma+\frac{1}{16\pi}\oint_{S^2}|\tilde{\nabla}f|^2d\sigma,
\end{align}
which is fundamental in the problem of prescribing Gaussian curvature on
the sphere $S^2$ (see (\cite{moser}, \cite{onofri}, see also \cite{ops3,ops2,ops1})). 
On manifolds with boundary, a class of inequalities analogous to
 \eqref{sob-n} and \eqref{onofri}  is the class of 
\emph{Sobolev trace inequalties}. A classical example of these 
inequalities is the following: consider the unit ball $B^d$
on Euclidean space $\mathbb{R}^d$ with the sphere $S^{d-1}$ as boundary and let 
$f$ be a function in $C^{\infty}(S^{d-1})$. 
If $v$ is a smooth extension of $f$ to $B^d$  
we have the inequality    
\begin{align}\label{sobtrace-d} 
\frac{d-2}{2}\vol(S^{d-1})^{\frac{1}{d-1}}\left(\oint_{S^{d-1}}|f|^{\frac{2(d-1)}{(d-2)}}d\sigma\right)^{\frac{d-2}{d-1}}&\le \int_{B^{d}}|\nabla v|^2dx+\frac{d-2}{2}\oint_{S^{d-1}}f^{2}d\sigma\\
&\text{when}~d>2.\nonumber
\end{align}
When $d=2$, inequality \eqref{sobtrace-d} becomes the classical Lebedev-Milin inequality \cite{LM}
\begin{align}\label{lm} 
\log\left(\frac{1}{\pi}\oint_{S^1}e^{f}d\sigma\right)\le\frac{1}{4\pi}\int_{D}|\nabla v|^2dx+\frac{1}{\pi}\oint_{S^1}fd\sigma,
\end{align}
where now $\nabla v$ is the gradient of $v$ with respect to the Euclidean metric on the disk $D=\{x\in\R^{2}:|x|\le 1\}$.
Inequality \eqref{sobtrace-d} has been derived by Escobar \cite{esc} and applied 
to study the Yamabe problem on manifolds with boundary; while the Lebedev-Milin inequality \eqref{lm}
has been applied to a wide variety of problems in classical analysis, including
the Bieberbach conjecture {\cite{dBr}) and by Osgood-Phillips-Sarnak (\cite{ops3,ops2,ops1})
 in the study
of the compactness of isospectral planar domains. 

In this note, we consider extensions of some of the above results to order 4. 
The order here refers to the order of operator involved in the derivation of
the inequalities. For the inequalities cited above, the operators which describe
the Euler-Lagrange  equation of the extremal functions involved in the proof of the 
 inequality  are either Laplace operator or the conformal Laplace operator, both of order 2. 
 In recent years, the role played by these second order operators has been 
vastly extended to operators of order 4 (e.g. the bi-Laplace operator or the Paneitz operator of order 4, 
which we will briefly discuss in section \ref{adapted}). 
For example, the Sobolev inequality which prescribes the embedding of
$ W_{o}^{2,2} $  to $L^{\frac {2n}{n-4}}$ on $\R^n$ for $n >4$ has a fourth order
analogue on $S^{n}$, namely  
\begin{align}\label{equ; sob4-on-sn}
C_n \left(\frac{1}{\vol(S^{n})}\oint_{S^n} |f|^{\frac {2n}{n-4}}d\sigma\right)^{\frac {n-4}{n}} \le \frac{1}{\vol(S^n)} \oint_{S^n}f P_4fd\sigma
\end{align} 
where $C_n = \frac {n (n-2)(n+2)(n-4)}{ 2^4}$, and $P_4$ is the 4-th order Paneitz operator on $S^n$ defined as 
\begin{align*}
P_4 = \left(-\tilde{\Delta}+\left(\frac{n}{2}\right)\left(\frac{n-2}{2}\right)\right)\left(-\tilde{\Delta}+\left(\frac{n+2}{2}\right)\left(\frac{n-4}{2}\right)\right),
\end{align*}
(see \cite[Theorem 1.1]{branson95}) where $\tilde{\Delta}$ is used to denote the Laplacian with respect to the metric $g_{S^{n}}$ (see \cite[Theorem 6]{be93}. When $n=4$, the
 inequality becomes an inequality of Moser-Onofri  type just as in \eqref{onofri}, and
takes the form
\begin{align}\label{equ: sob4-s4}
\log\left(\frac{1}{\vol(S^{4})} \oint_{S^4} e^{4 ( w - \bar{w})}d\sigma\right) \le\frac {1}{3\vol(S^4)} \oint_{S^4}((\tilde{\Delta} w)^2+2 |\tilde{\nabla} w|^2 )d\sigma,
\end{align}
which is a special case of Beckner's log-Sobolev inequality, see \cite{be93}, 
also see Chang (\cite{chang96}, Lecture 5).
As mentioned above, the main results in this paper are extensions of inequalities 
\eqref{sobtrace-d} when $d>2$  and \eqref{lm} above
to sharp Sobolev trace inequalities of order 4. We will adopt the following notational convention
\begin{convention}\em{ We will sometimes use $g_{0}$ to denote the Euclidean metric on $\R^{d}$. The Euclidean metric 
is also commonly denoted as $|dx|^2$.  
}
\end{convention}
We start by stating our result for dimension $d>4$ 

\begin{thmx}\label{dge4} 
Given $f\in C^{\infty}(S^{d-1})$ with $d>4$, suppose $v$ is a smooth extension of $f$ to the unit ball $B^{d}$ which also satisfies the Neumann boundary condition
\begin{align}
\frac{\partial}{\partial n}v|_{\partial B^d}=-\frac{(d-4)}{4}f. \label{dNeumann}
\end{align}
 Then we have the inequality
\begin{align}\label{mainineqge4}
c_{d}\left(\vol(S^{d-1})\right)^{\frac{3}{d-1}}&\left(\oint_{S^{d-1}}|f|^{\frac{2(d-1)}{d-4}}d\sigma\right)^{\frac{d-4}{d-1}}\nonumber\\
&\le\int_{B^d}(\Delta_{g_0} v )^2dx
+ 2 \oint_{S^{d-1}}|\tilde{\nabla}f|^2d\sigma+  b_{d}\oint_{S^{d-1}}|f|^2d\sigma,   
\end{align}
where $c_d=\frac{d(d-2)(d-4)}{4}$, $b_{d}=\frac{d(d-4)}{2}$ and $\tilde{\nabla}f$ is the gradient of $f$ with respect to the round metric $g_{S^{d-1}}$. 
Moreover, equality holds if and only if $v$ is a bi-harmonic extension of a function of the form
$f_{z_{0}}(\xi)=c|1-\langle z_{0},\xi\rangle|^{\frac{4-d}{4}}$ where $c$ is a constant, $\xi\in S^{d-1}$, $z_{0}$ is some point in $B^{d}$ and $v$ satisfies the boundary condition \eqref{dNeumann}. When $f\equiv 1$, equality in \eqref{mainineqge4} is attained by the function $v(x)=1+\frac{d-4}{4}(1-|x|^2)$.  
 \end{thmx}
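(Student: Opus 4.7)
My plan is to reduce \eqref{mainineqge4} to a sharp Sobolev inequality on $S^{d-1}$ for the order-three conformally covariant operator $P_3$ arising from scattering theory on the hyperbolic $d$-ball. With $g_+=4(1-|x|^2)^{-2}g_0$ the Poincar\'e metric on $B^d$, the renormalized scattering operator of $(B^d,g_+)$ at the spectral value $s=(d+2)/2$ gives a conformally covariant operator $P_3$ on the conformal infinity $(S^{d-1},g_{S^{d-1}})$, whose eigenvalues on a degree-$k$ spherical harmonic are $(k+\tfrac{d-4}{2})(k+\tfrac{d-2}{2})(k+\tfrac{d}{2})$. The sharp $P_3$-Sobolev inequality on $S^{d-1}$, equivalent via stereographic projection to the sharp Sobolev inequality for $(-\tilde{\Delta})^{3/2}$ on $\R^{d-1}$, reads
\begin{align*}
\oint_{S^{d-1}}f P_3 f\,d\sigma\ge \lambda_0 \vol(S^{d-1})^{3/(d-1)}\left(\oint_{S^{d-1}}|f|^{2(d-1)/(d-4)}d\sigma\right)^{(d-4)/(d-1)},
\end{align*}
where $\lambda_0=d(d-2)(d-4)/8=c_d/2$ is the eigenvalue of $P_3$ on constants. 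Its extremals are the conformal images of constants on $S^{d-1}$, which are precisely the family $f_{z_0}(\xi)=c|1-\langle z_0,\xi\rangle|^{(4-d)/4}$.

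Granting this sharp sphere inequality, I would prove \eqref{mainineqge4} as follows. First, since the left-hand side depends only on $f$, one may minimize the right-hand side over all $v$ extending $f$ and satisfying \eqref{dNeumann}; a first-variation argument identifies the minimizer as the unique biharmonic extension with these boundary data, so it suffices to prove \eqref{mainineqge4} for this canonical $v$. Next, I would decompose $f=\sum_{k\ge 0}f_k$ into spherical harmonics with $-\tilde{\Delta}f_k=\mu_k f_k$, $\mu_k=k(k+d-2)$, and use the ansatz $v_k=(A_k r^k+B_k r^{k+2})f_k$ for the biharmonic extension of each mode, with $A_k,B_k$ determined linearly by the two boundary conditions. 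Since $\Delta_{g_0}(r^k Y_k)=0$ and $\Delta_{g_0}(r^{k+2}Y_k)=2(2k+d)r^k Y_k$ for any degree-$k$ harmonic $Y_k$, direct integration yields an explicit closed form for $\int_{B^d}(\Delta_{g_0}v_k)^2\,dx$; combined with $(2\mu_k+b_d)\oint f_k^2\,d\sigma$ from the boundary corrections, the goal is the identity
\begin{align*}
\int_{B^d}(\Delta_{g_0}v)^2 dx+2\oint_{S^{d-1}}|\tilde{\nabla}f|^2 d\sigma+b_d\oint_{S^{d-1}}f^2 d\sigma=2\oint_{S^{d-1}}f P_3 f\,d\sigma.
\end{align*}
Combining this identity with the sharp $P_3$-Sobolev inequality and the arithmetic $2\lambda_0=c_d$ proves \eqref{mainineqge4}. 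The extremal characterization transfers from that of the sharp $P_3$-Sobolev: extremals on the sphere are the $f_{z_0}$, and on the ball they are the biharmonic extensions of such $f$ subject to \eqref{dNeumann}.

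The principal obstacle is the factorization in the previous display. The coefficient in \eqref{dNeumann} is precisely what makes the mode-by-mode contribution, a cubic polynomial in $k$, split as $2(k+\tfrac{d-4}{2})(k+\tfrac{d-2}{2})(k+\tfrac{d}{2})$; any other choice would break the factorization and sever the link to the sharp sphere inequality. Conceptually, this is a scattering matrix identity reflecting that the mixed Dirichlet--Neumann problem for the bi-Laplacian on $(B^d,g_0)$ conformally corresponds, via the conformal factor relating $g_+$ to $g_0$, to the canonical asymptotic expansion at infinity of a solution of $(-\Delta_{g_+}-s(d-1-s))U=0$ at the distinguished spectral value $s=(d+2)/2$. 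The verification itself reduces to an explicit polynomial identity in $d$ and $k$, which is the algebraic heart of the proof.
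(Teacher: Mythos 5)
Your overall strategy is sound and it is a genuinely different route from the paper's. The paper derives the key identity
\begin{align*}
\int_{B^d}(\Delta_{g_0}v_f)^2\,dx + 2\oint_{S^{d-1}}|\tilde{\nabla}f|^2\,d\sigma + b_d\oint_{S^{d-1}}f^2\,d\sigma = 2\oint_{S^{d-1}}f\,\PP_3 f\,d\sigma
\end{align*}
conceptually: it passes through the adapted metric $g^*=\psi^{4/(d-4)}g_0$, the Case--Chang energy identity \eqref{eqn e4} for the Paneitz operator, the explicit form of $\psi$ from Proposition \ref{proppsi}, the conformal covariance of $P_4$, and Branson's identification $P_3=\PP_3$ and $Q_3=\frac{d(d-2)}{4}$; this is the content of Sections \ref{explicit}--\ref{proofdge4}, with the reduction to the flat metric carried out in Corollary \ref{cor_energy}. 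You instead propose to establish the same identity by spherical-harmonic decomposition and an explicit biharmonic extension $v_k=(A_kr^k+B_kr^{k+2})Y_k$ of each mode, which is more elementary and self-contained. The paper's route has the advantage of explaining conceptually why the Neumann coefficient is what it is (it comes from the adapted defining function $\psi$, which is why Lemma \ref{lemdN} is where it appears); your route, if carried through, would make the Beckner eigenvalue factorization transparent. Both converge on the reduction to Beckner's sharp $\PP_3$-Sobolev inequality via $c_d=2a_d$, and the extremal characterization transfers identically.

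There is, however, one concrete issue you would have discovered had you not deferred the "algebraic heart of the proof": with the coefficient $-\frac{d-4}{4}$ as literally printed in \eqref{dNeumann}, the mode-by-mode factorization you assert does \emph{not} hold. Solving $A_k+B_k=1$, $kA_k+(k+2)B_k=-\frac{d-4}{4}$ gives $B_k=-\frac{4k+d-4}{8}$, hence $\int(\Delta_{g_0}v_k)^2dx=4B_k^2(2k+d)=\frac{(4k+d-4)^2(2k+d)}{16}$, and already at $k=0$ the left side of your display equals $\frac{d(d-4)(d+4)}{16}$ while $2\PP_3$ on constants gives $\frac{d(d-4)(d-2)}{4}$; these disagree for all $d>4$. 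The coefficient in \eqref{dNeumann} is a typo: it should be $-\frac{d-4}{2}$, which is what appears in Lemma \ref{lemdN} (the class $\ff_f$), is the actual normal derivative of the adapted factor $\psi=1+\frac{d-4}{4}(1-|x|^2)$ at $|x|=1$, and is the normal derivative of the stated extremal $v(x)=1+\frac{d-4}{4}(1-|x|^2)$. With the corrected coefficient one gets $B_k=-\frac{2k+d-4}{4}$, hence $\int(\Delta_{g_0}v_k)^2dx=\frac{(2k+d-4)^2(2k+d)}{4}$, and the target identity becomes
\begin{align*}
\frac{(2k+d-4)^2(2k+d)}{4}+2k(k+d-2)+\frac{d(d-4)}{2}=\frac{(2k+d-4)(2k+d-2)(2k+d)}{4},
\end{align*}
which does hold (the difference of the first term and the right side is $\frac{(2k+d-4)(2k+d)}{2}=2k(k+d-2)+\frac{d(d-4)}{2}$). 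So your proposed proof works, but only after replacing the Neumann coefficient by $-\frac{d-4}{2}$; you should carry out this check rather than asserting it, both to validate the factorization and because that is exactly where the numerical discrepancy in the theorem statement reveals itself.
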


The corresponding theorem for dimension $4$ is the following inequality which is a generalization of the classical Lebedev-Milin inequality \eqref{lm}:

\begin{thmx}\label{deq4}
Given $\varphi\in C^{\infty}(S^{3})$, suppose $w$ is a smooth extension of $\varphi$ to the unit ball $B^{4}$. 
If $w$  satisfies the Neumann boundary condition
\begin{align}
\frac{\partial}{\partial n}w|_{\partial B^4}=0,\label{regneumann}
\end{align}
then we have the inequality
\begin{align}\label{mainineq4}
\log\left(\frac{1}{2\pi^2}\oint_{S^3}e^{3(\varphi-\bar{\varphi})}d\sigma\right)\le \frac{3}{16\pi^2}\int_{B^4}(\Delta_{g_0} w)^2dx+\frac{3}{8\pi^2}\oint_{S^{3}}|\tilde{\nabla}\varphi|^2 d\sigma,
\end{align}
Moreover, equality holds if and only if $w$ is a bi-harmonic extension of a function of the form  
$\varphi_{z_{0}}(\xi)=-\log\left|1-\langle z_0,\xi\rangle\right|+c$ where $c$ is a constant, $\xi\in S^{3}$, $z_{0}$ is some point in $B^4$ and $v$ satisfies the boundary condition \eqref{regneumann}. 
\end{thmx}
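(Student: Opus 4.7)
My plan is to reduce \eqref{mainineq4} to Beckner's sharp log-Sobolev inequality on $S^3$ at the critical order $3$, by identifying the boundary quadratic form associated to biharmonic extensions on $B^4$ with vanishing Neumann data as the conformally covariant operator $P_3$ on $S^3$. A standard first-variation argument reduces us to the case in which $w$ is the biharmonic extension of $\varphi$ with $\partial_n w|_{S^3}=0$: among all extensions meeting these boundary conditions, the biharmonic one minimises $\int_{B^4}(\Delta w)^2\,dx$.

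Under this assumption, two applications of Green's second identity (using $\Delta^2 w=0$ and $\partial_n w|_{S^3}=0$) yield
\[
\int_{B^4}(\Delta w)^2\,dx=-\oint_{S^3}\varphi\,\partial_n(\Delta w)\,d\sigma.
\]
I would then expand $\varphi$ in spherical harmonics on $S^3$ and solve the biharmonic equation mode by mode: writing the general biharmonic solution on the ball as $u_1+|x|^2u_2$ with $u_1,u_2$ harmonic, the extension satisfying $w|_{S^3}=\varphi_\ell$ and $\partial_r w|_{r=1}=0$ comes out to $w_\ell(r\xi)=\tfrac12\bigl((\ell+2)r^\ell-\ell r^{\ell+2}\bigr)\varphi_\ell(\xi)$. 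A direct computation then gives $\Delta w_\ell=-2\ell(\ell+2)\,r^\ell\varphi_\ell$ and hence the Dirichlet-to-Neumann eigenvalues $-\partial_r(\Delta w_\ell)|_{r=1}=2\ell^2(\ell+2)$. The operator $P_3$ on $S^3$ is diagonal on spherical harmonics with eigenvalues $\ell(\ell+1)(\ell+2)=\Gamma(\ell+3)/\Gamma(\ell)$ (it can be realised as the scattering operator at $s=3$ for the Laplacian on $\mathbb{H}^4\cong B^4$, or equivalently as $\sqrt{-\tilde\Delta+1}\,(-\tilde\Delta)$). The elementary identity $\tfrac12\cdot 2\ell^2(\ell+2)+\ell(\ell+2)=\ell(\ell+1)(\ell+2)$ for every $\ell\ge 0$ then yields
\[
\tfrac{1}{2}\int_{B^4}(\Delta w)^2\,dx+\oint_{S^3}|\tilde\nabla\varphi|^2\,d\sigma=\oint_{S^3}\varphi\,P_3\,\varphi\,d\sigma,
\]
and multiplying by $\tfrac{3}{8\pi^2}$ and using $\vol(S^3)=2\pi^2$ identifies the right-hand side of \eqref{mainineq4} with $\tfrac{3}{4\vol(S^3)}\oint_{S^3}\varphi\,P_3\,\varphi\,d\sigma$. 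The inequality \eqref{mainineq4} is then exactly Beckner's sharp log-Sobolev inequality on $S^3$ at the critical order $3$ from \cite{be93}.

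For the equality case I would appeal to the characterisation of extremals in Beckner's inequality: they are the logarithmic conformal factors of M\"obius transformations of $S^3$, which are of the form $\varphi_{z_0}(\xi)=-\log|1-\langle z_0,\xi\rangle|+c$. Since the biharmonic extension with vanishing Neumann trace is uniquely determined by $\varphi$, this in turn characterises all extremals of \eqref{mainineq4}. The main technical step I expect is the mode-by-mode identification of the Euclidean bi-Laplace boundary quadratic form on $B^4$ with $P_3$ on $S^3$; conceptually this is forced by the scattering-theoretic interpretation of $P_3$, but the direct spherical-harmonic calculation above is the cleanest verification, after which everything reduces to the known Beckner inequality.
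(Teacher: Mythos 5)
Your proposal is correct, and the final inequality you obtain is exactly \eqref{mainineq4}, but your route to the key identity
\[
\tfrac{1}{2}\int_{B^4}(\Delta_{g_0} w_\varphi)^2\,dx+\oint_{S^3}|\tilde\nabla\varphi|^2\,d\sigma=\oint_{S^3}\varphi\,P_3\varphi\,d\sigma
\]
is genuinely different from the paper's. The paper derives this identity by invoking the abstract energy identity of Case--Chang (Proposition \ref{properties}) for the adapted metric $g^*=e^{2\rho}g_0$, then translating it from $g^*$ to $g_0$ through the conformal covariance of the Paneitz operator and a direct comparison of the boundary terms $\II_1,\II_2,\II_3$ under the conformal change (Lemma \ref{I3lemma} and the computation in Section \ref{proofeq4}). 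You instead prove the identity from scratch on the flat ball: you write the biharmonic extension with vanishing Neumann trace mode by mode on spherical harmonics as $w_\ell=\tfrac12\bigl((\ell+2)r^\ell-\ell r^{\ell+2}\bigr)\varphi_\ell$, compute the Dirichlet-to-Neumann eigenvalues $2\ell^2(\ell+2)$, and match them against the eigenvalues $\ell(\ell+1)(\ell+2)$ of $\PP_3=(B-1)B(B+1)$ via the arithmetic identity $\ell^2(\ell+2)+\ell(\ell+2)=\ell(\ell+1)(\ell+2)$. This is essentially an independent proof of what the paper quotes as Lemma \ref{P3lemma} (from \cite{cqII}), bypassing the scattering-theoretic and adapted-metric machinery entirely. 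Your argument is more elementary and self-contained for the model case $(B^4,S^3,g_0)$; the paper's argument is heavier but is the specialization to $d=4$ of a general scheme that also yields Theorem \ref{dge4} for $d>4$, so the two presentations serve different expository purposes. The minimization step (biharmonic extension minimizes $\int(\Delta w)^2$ over extensions with fixed Dirichlet and Neumann data) and the equality analysis via Beckner's extremals match the paper.
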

We remark that the method that we used to discover and establish the
above sharp inequalities is non-traditional in the sense that we first
derived such inequalities for some metric $g^{*}$ on $B^d$ which is in the conformal
class of the Euclidean metric $ g_{0}=|dx|^2$, and then we derived the  desired inequalities in Theorems \ref{dge4} and \ref{deq4} through
conformal covariant properties between the 4th order 
``Paneitz operator" on $g^{*}$ and the bi-Laplace operator on the 
flat metric $g_{0}=|dx|^2$. The choice of the metric $g^{*}$ in turn comes from
the consideration of the hyperbolic metric $g_+$ on $B^d$ and the connection
between $g^{*}$ and $g_+$ is established through the scattering theory. 
In section 2 below we will explain this connection and why the choice of $g^{*}$ 
is a ``natural" one for the Sobolev inequalities in Theorem \ref{dge4} 
and \ref{deq4} above. We remark that when we apply the same procedure to the 
\emph{second order} conformal Laplacian on $B^d$, the $g^{*}$ metric we obtain 
agrees with $g_{0}$. In this sense, our choice of the metric $g^{*}$ is natural. 

We also point out that once the sharp inqualities in the statements of Theorem A and B 
are formulated, it is not surprising that one can find more direct proofs of these two theorems, 
but we would like to record here our original approach to discover such
sharp inequalities.  We also believe that our method may lead to the formulation 
of other sharp inequalities.

\subsection{Application to Polyakov-\'{A}lvarez type formulas} We conclude by providing an application to the study of the extremals of the functional determinants 
for conformally covariant operators on manifolds of dimension four with boundary. 

First we recall the classical Polyakov-\'{A}lvarez formula for surfaces with 
boundary: if $(M,\partial M,g)$ is a surface with boundary and we let 
$g_{w}$ denote the metric 
\begin{align*}
g_{w}=e^{2w}g,
\end{align*}
where $w$ is a smooth function on $M$, then there is a formula 
relating the functional determinants associated to the Laplace Beltrami 
operators $\Delta_{g_{w}}$ and $\Delta_{g}$ and this formula is written 
entirely in terms of integrals of local invariants of $(M,g)$ and of 
$\partial M$. More precisely, we have 
\begin{align}
F[w]:=\log\left(\frac{\det(\Delta_{g_w})}{\det(\Delta_{g})}\right)&=-\frac{1}{6\pi}\left(\frac{1}{2}\int_{M}|\nabla w|_{g}^2dV_{g}+\int_{M}wK_{g}dV_{g}+\oint_{\partial M}wk_{g}ds_{g}\right)\nonumber\\
&-\frac{1}{4\pi}\left(\oint_{\partial M}k_{g_{w}}(ds)_{g_{w}}-\oint_{\partial M}k_{g}(ds)_{g}\right).\label{polalv}
\end{align} 
Here $K_{g}$ represents the Gauss curvature of $g$, $k_{g}$ is the geodesic curvature of $\partial M$ with respect to $g$ and $(ds)_{g}$ is the line element on $\partial M$ with respect to $g$. In the celebrated work of Osgood-Phillips-Sarnak \cite{ops3,ops2,ops1} it is shown that there is a relationship between the Lebedev-Milin inequality \eqref{lm} and the study of extremals of the ratio $w\rightarrow F[w]$.  More precisely, if in \eqref{polalv} we isolate the functional determinant coefficient
\begin{align}\label{F1}
F_{1}[w]=-\frac{1}{6\pi}\left(\frac{1}{2}\int_{M}|\nabla w|_{g}^2dV_{g}+\int_{M}wK_{g}dV_{g}+\oint_{\partial M}wk_{g}ds_{g}\right),
\end{align} 
then $F_{1}[w]$ is conformally invariant and it is proved in \cite{ops2} 
that inequality \eqref{lm} can be applied to show that $F_{1}[w]$ has a definite sign when the background metric is the Euclidean metric.

A fundamental point in the derivation of \eqref{polalv} is that the Laplace Beltrami operator $\Delta_{g}$ has a conformal invariance property in dimension $2$, 
however, this conformal property fails in higher dimensions.  A result extending \eqref{polalv} for functional determinants of conformally covariant operators 
on compact manifolds with boundary in dimension $4$ was obtained by Chang and Qing in \cite{cqI} based on an earlier preliminary version of the formula by Branson and Gilkey \cite{BG}. In \cite{cqI}, Chang and Qing considered the \emph{Conformal Laplacian operator} $L_{g}$ given explicitly in dimension $4$ by 
\begin{align*}
L_{g}=-\Delta_{g}+\frac{1}{6}R_{g}, 
\end{align*} 
and they coupled $L_{g}$ with a boundary operator, namely the \emph{Robin operator} defined as
\begin{align*}
\mathfrak {B}_g=\frac{\partial}{\partial n}+\frac{1}{3}H,
\end{align*}   
where $n$ is the outward normal to the boundary $\partial M$ and $H$ is the mean curvature of $\partial M$ with respect to $g$. 
The pair $(L_{g}, {\mathfrak B}_{g})$ and its conformal properties are fundamental 
in the study of the Yamabe problem with boundary 
(see for example \cite{esc}). 
Let $(M^{4},\partial M,g)$ be a compact manifold with boundary and 
let $g_{w}=e^{2w}g$ where $w$ is a smooth function defined on $\overline{M}$. 
For the pair, $(L_{g},{\mathfrak B}_g)$, it is proved in \cite{cqI} that there is a Polyakov-\'{A}lvarez formula of the form 
 \begin{align}\label{11terms}
 \log\left( \frac {\det (L_{g_w},\mathfrak{B}_{g_w})}
{\det (L_g, \mathfrak{B}_g)}\right)=\sum_{i =1}^{11} \gamma_{i} {\mathbf I}_{i} [g, w],
\end{align}
where the terms ${\mathbf I}_{i} [g, w]$ for $i=1,\ldots,11$ are computed in terms of integrals of local invariants of $M$ and $\partial M$ with respect to the metrics $g$ and $g_{w}$.
Moreover the second term ${\mathbf I}_{2} [g, w]$ is conformally invariant and plays the same role as $F_{1}[w]$ in \eqref{polalv} (where $F_{1}[w]$ is given by \eqref{F1}). 
We will show an explicit expression for ${\mathbf I}_{2} [g, w]$ in Section 6 (see equation \eqref{I2formula}).  This is why we refer to ${\bf I}_{2}[g,w]$ as 
the \emph{main term in the functional determinant expansion} \eqref{11terms}. 
In (\cite{cqI}, Corollary 3.4), under two additional geometric assumptions, one can show that ${\mathbf I}_{2} [g, w]$ has a definite sign on the model case $(B^4,S^3,g_0)$.

As an application of the sharp inequality in Theorem \ref{deq4}, we will here show that the metric $g^*$ is a minimizer of the functional $\ii_{2}$  over the class of functions with prescribed boundary value and Neumann boundary condition.   
More precisely, letting $C_{\varphi}$  and $\tilde{C}_{\varphi}$ be the following classes of functions
\begin{align}
C_{\varphi}=\left\{w\in C^{\infty}(B^4):w|_{S^3}=\varphi,\frac{\partial}{\partial n}w|_{S^3}=0\right\}, \label{defC}
\end{align}
and
\begin{align}
\tilde{C}_{\varphi}=\left\{\chi\in C^{\infty}(B^4):\chi|_{S^3}=\varphi,\frac{\partial}{\partial n}w|_{S^3}=-1\right\}\label{delCtilde}.
\end{align} 
Observe that any function $\chi$  in $\tilde{C}_{\varphi}$ can be written as $\chi=w+\rho$ where $w\in C_{\varphi}$. We will prove that the main term in the expansion \eqref{11terms} is non-negative for the metric $g^*$ and we also describe in detail the equality case: 
\begin{thmx}\label{thmFD}
Given $\varphi \in C^{\infty}(S^3) $ with $\int_{S^3} e^{3\varphi} d \sigma = |S^3|$, 
then for all $w\in C_{\varphi}$, we have the following inequalities
\begin{enumerate}
\item $\ii_{2}[g^{*},w]\ge 0$ with equality if and only if $\varphi(\xi)=-\log\left|1-\langle z_{0},\xi\rangle\right|+c$ where $z_{0}\in B^4$ is fixed, $\xi\in S^3$, $c$ is a constant and $w$ is a bi-harmonic extension of $\varphi$ in $C_{\varphi}$. It follows that $g^*$ is a global minimizer of $\ii_{2}$ for functions in the class $C_{\varphi}$
\item $\ii_{2}[g_{0},\chi]\ge 0$ where $\chi\in\tilde{C}_{\varphi}$. Equality is attained for $\varphi=-\log\left|1-\langle z_{0},\xi\rangle\right|+c$ where $z_{0}\in B^4$ is fixed, $\xi\in S^{3}$, $c$ is a constant and $\chi$ is a bi-harmonic extension of $\varphi$ in $\tilde{C}_{\varphi}$.   
\end{enumerate}
\end{thmx}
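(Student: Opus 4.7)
The plan is to derive $\ii_{2}[g^{*}, w] \ge 0$ directly from the sharp trace inequality of Theorem \ref{deq4}, and then to transfer the result to $\ii_{2}[g_{0}, \chi]$ via the conformal invariance of $\ii_{2}$. First I would substitute $g^{*}$ and $w$ into the explicit formula \eqref{I2formula} for $\ii_{2}$. Because $g_{0}$ is flat and $S^{3}$ has vanishing trace-free second fundamental form and constant mean curvature, many of the local invariants appearing in \eqref{I2formula} drop out once the background is rewritten in terms of $g_{0}$ using the conformal covariance of the Paneitz operator and its associated boundary operators. What remains is a bulk bi-Laplacian energy of the conformal factor $w$, a Dirichlet energy of its boundary trace $\varphi$, and a lower-order boundary term linear in $w$.

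Next I would match these three ingredients with the right-hand side of \eqref{mainineq4}. Under the normalization $\oint_{S^{3}} e^{3\varphi}\, d\sigma = |S^{3}| = 2\pi^{2}$, the left-hand side of \eqref{mainineq4} collapses to
\[ \log\left( \frac{1}{2\pi^{2}} \oint_{S^{3}} e^{3(\varphi - \bar{\varphi})}\, d\sigma \right) = -3\bar{\varphi}, \]
so Theorem \ref{deq4} is equivalent to
\[ \frac{3}{16\pi^{2}} \int_{B^{4}} (\Delta_{g_{0}} w)^{2}\, dx + \frac{3}{8\pi^{2}} \oint_{S^{3}} |\tilde{\nabla}\varphi|^{2}\, d\sigma + 3\bar{\varphi} \ge 0. \]
A comparison of coefficients will show that, up to a positive multiplicative constant, this quantity is precisely $\ii_{2}[g^{*}, w]$, yielding part (1). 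The extremal characterization then follows from the equality case of Theorem \ref{deq4}: $\varphi$ must be of the form $\varphi_{z_{0}}(\xi) = -\log|1 - \langle z_{0}, \xi\rangle| + c$, and $w \in C_{\varphi}$ is uniquely pinned down as the bi-harmonic extension of $\varphi$ compatible with the Neumann condition $\partial_{n} w|_{S^{3}} = 0$.

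For part (2), I would use the fact that $g^{*} = e^{2\rho^{*}} g_{0}$ for a specific conformal factor $\rho^{*} \in C^{\infty}(\overline{B^{4}})$ with $\rho^{*}|_{S^{3}} = 0$ and $\partial_{n}\rho^{*}|_{S^{3}} = -1$, as determined by scattering theory in Section \ref{adapted}. Given $\chi \in \tilde{C}_{\varphi}$, the function $w := \chi - \rho^{*}$ then lies in $C_{\varphi}$, and the conformal covariance of $\ii_{2}$ gives $\ii_{2}[g_{0}, \chi] = \ii_{2}[g^{*}, w]$. Thus part (2) reduces to part (1), and the extremal characterization transfers automatically.

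The main obstacle is the algebraic matching step: expanding \eqref{I2formula} on $(B^{4}, g_{0})$ and verifying that every contribution reorganizes into the bi-Laplacian energy, the boundary Dirichlet energy, and the mean-value term $\bar{\varphi}$ with exactly the coefficients appearing in \eqref{mainineq4}. The conformally non-invariant boundary pieces of $\ii_{2}$ must cancel against terms produced by the Neumann condition $\partial_{n} w|_{S^{3}} = 0$, and it is precisely this bookkeeping that forces the sharp constants of Theorem \ref{deq4} to be used in an essential way.
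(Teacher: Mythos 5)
Your proposal tracks the paper's argument quite closely. For part (1), the route is identical: substitute $g^{*}$ and $w$ into \eqref{I2formula}, observe that for $w\in C_{\varphi}$ the boundary data force $D[g^{*},w]=0$ so that $\ii_{2}[g^{*},w]=b_{2}[g^{*},w]=b_{2}[g_{0},w]$, evaluate $b_{2}[g_{0},w]$ on the flat ball to obtain
\begin{align*}
\ii_{2}[g^{*},w]=\frac{1}{4}\int_{B^4}(\Delta_{g_{0}}w)^2\,dV_{g_{0}}+\frac{1}{2}\oint_{S^3}|\tilde{\nabla}\varphi|^2\,d\sigma+2\oint_{S^3}\varphi\,d\sigma,
\end{align*}
and conclude from Theorem~\ref{deq4} with the normalization $\oint_{S^3}e^{3\varphi}\,d\sigma=2\pi^2$. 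Your coefficient comparison (multiply by $4\pi^2/3$) is exactly what the paper does, and the equality characterization is inherited from the equality case of Theorem~\ref{deq4}.

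For part (2) you have the right reduction ($w:=\chi-\rho\in C_{\varphi}$, then transfer), but the step you label ``conformal covariance of $\ii_{2}$ gives $\ii_{2}[g_{0},\chi]=\ii_{2}[g^{*},w]$'' hides genuine work. This identity is not a formal consequence of conformal invariance in the naive sense: the functional $\ii_{2}[g,w]$ depends on both the background metric and the conformal factor, and the natural cocycle identity would give $\ii_{2}[e^{2\rho}g_{0},w]=\ii_{2}[g_{0},w+\rho]-\ii_{2}[g_{0},\rho]$, so one must separately verify $\ii_{2}[g_{0},\rho]=0$, or else proceed directly. The paper's Lemma~\ref{I2property} does the latter: it computes $b_{2}[g_{0},w+\rho]-b_{2}[g_{0},w]$ by integrating $\rho\,\Delta_{g_{0}}^2 w$ by parts (using $\rho|_{S^3}=0$, $\rho_n|_{S^3}=-1$, $\Delta_{g_0}\rho=-4$, $(P_4)_{g_0}\rho=0$, $P_3^b\rho=0$) and shows the resulting boundary term is exactly cancelled by $\tfrac{1}{12}D[g_{0},w+\rho]$. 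You should either carry out that boundary computation or verify $\ii_{2}[g_{0},\rho]=0$ explicitly; without one of these, the appeal to ``conformal covariance'' is a gap, though an easily fillable one.
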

\begin{remark} \em{In \cite{cqI,cqII} an inequality similar to the one stated in Theorem \ref{deq4} was proved under very restrictive conditions. See for example \cite[Lemma 3.4]{cqII}} 
\end{remark}

In a forthcoming joint work of the authors together with Jeffrey Case, we will 
give some further applications of the lower bound in Theorem \ref{thmFD} 
and also extend the study to extremals for the log-determinant functional \eqref{11terms}. 

\subsection{Acknowledgements} 
The authors would like to thank Jeffrey Case for many helpful discussions.

\section{Adapted Metrics}\label{adapted}
As we have mentioned in the introduction, one of the key steps used in the proof
of the inequalities in this paper is the consideration of a special metric called
``adapted metric" which 
we will denote by $g^{*}$, which lies in the conformal class of the flat metric
$g_0 = |dx|^2$ on the Euclidean unit ball $B^{d}$ and is with totally 
geodesic boundary. The consideration of the metric $g^{*}$, introduced in the
work of Case-Chang \cite{CC14}, is natural from conformal geometry point of
view, and via the connnection of the metric as a preferred conformal compactification
(as we will explain below) of the hyperbolic metric on $B^d$. 

We now recall the definition of the $g^{*}$ metric in a general setting and
will derive its specific form in the special setting of $B^{d}$ with the sphere as
boundary, and when the corresponding boundary operator is of order three.
\subsection{GJMS operators, scattering theory}
We first recall the definition of the GJMS operators via scattering theory \cite{GZ03}.  A triple $(X^{n+1},M^n,g_+)$ is a \emph{Poincar\'{e}-Einstein manifold} if
\begin{enumerate}
\item $X^{n+1}$ is (diffeomorphic to) the interior of a compact manifold with boundary $\partial X =M^n$,
\item $(X^{n+1},g_+)$ is complete with $\ric(g_+)=-ng_+$, and
\item there exists a nonnegative $r \in C^\infty(X)$ such that $r^{-1}(0)=M^n$, $dr\ne 0$ along $M$, and the metric $g:=r^2g_+$ extends to a smooth metric on $X^{n+1}$.
\end{enumerate}
A function $r$ satisfying these properties is called a \emph{defining function}, since $r$ is only determined up to multiplication by a positive smooth function on $X$, it is clear that only the conformal class $[h]:=[g|_{TM}]$ on $M$ is well-defined for a Poincar\'e--Einstein manifold.

Given a Poincar\'e--Einstein manifold $(X^{n+1},M^n,g_+)$ and a representative $h$ of the conformal boundary, there exists a unique defining function $r$, called the \emph{geodesic defining function}, such that, locally near $M$, the metric $g_+$ takes the form $g_+ = r^{-2}\left(dr^2 + h_r\right)$
for $h_r$ a one-parameter family of metrics on $M$ with $h_0=h$ and having an asymptotic expansion involving only even powers of $r$, at least up to order $n$ (cf. \cite{FG12, GL91,L95}).

It is well-known (see \cite{GZ03,MM87} for more general statements) that given $f\in C^\infty(M)$ and $s\in C$ such that $ \mathrm{Re}(s)>\frac{n}{2}$, $s\not\in\frac{n}{2}+ \mathbb{N}_{0}$, where $\mathbb{N}_{0}=\mathbb{N}\cup\{0\}$, and $s(n-s)$ is not in the pure-point spectrum $\sigma_{\mathrm{pp}}(-\Delta_{g_+})$ of $-\Delta_{g_+}$, the Poisson equation
\begin{align}
\label{eqn:poisson_equation}
-\Delta_{g_+}u - s(n-s)u = 0~\text{in $X$},
\end{align}
has a unique solution of the form
\begin{equation}\label{eqn:poisson_asymptotics}
u = Fr^{n-s} + Hr^s \qquad\text{for $F,H\in C^\infty\left(X\right)$ and $F |_M=f$}.
\end{equation}
Indeed, $F$ has an asymptotic expansion
\begin{equation}
\label{eqn:F_expansion}
F = f_{(0)} + f_{(2)}r^2 + f_{(4)}r^4 + \dotsb
\end{equation}
for $f_{(0)}=f$ and all the functions $f_{(2\ell)}$ are determined by $f$.  The \emph{Poisson operator} ${\mP}(s)$ is the operator which maps $f$ to the solution $u={\mP}(s)f$, and this operator is analytic for $s(n-s)\not\in\sigma_{\mathrm{pp}}(-\Delta_{g_+})$.  The \emph{scattering operator} is defined by $S(s)f = H|_M$.  This defines a meromorphic family of pseudodifferential operators in $\mathrm{Re}(s)>\frac{n}{2}$.  
Given $\gamma \in (0, \frac {n}{2})$, Graham and Zworski (\cite {GZ03}) defined
the \emph{fractional GJMS operator} $P_{2 \gamma}$ as the operator 
\begin{equation}
\label{eqn:scattering_definition}
P_{2\gamma}f := d_\gamma S\left(\frac{n}{2}+\gamma\right)f \qquad\text{for $d_\gamma=2^{2\gamma}\frac{\Gamma(\gamma)}{\Gamma(-\gamma)}$}.
\end{equation}
For $\gamma\in\mathbb{N}_0$, this definition recovers the GJMS operators \cite{GJMS1992}. For $\gamma\in\left(0,\frac{n}{2}\right)$, Graham and Zworski showed that $P_{2\gamma}$ is a formally self-adjoint pseudodifferential operator with principle symbol equal to the principle symbol of $(-\Delta)^\gamma$, and moreover, if $\hat h=e^{2\tau}h$ is another choice of conformal representative of the conformal boundary, then
\begin{equation}
\label{eqn conformal covariant}
 \hat P_{2\gamma}f = e^{-\frac{n+2\gamma}{2}\tau} P_{2\gamma}\left(e^{\frac{n-2\gamma}{2}\tau}f\right), 
\end{equation}
for all $f\in C^\infty(M)$.  Together these properties justify the terminology ``fractional GJMS operator.''

We adopt the convention that for $\gamma\in\left(0,\frac{n}{2}\right)$, the \emph{fractional $Q$-curvature $Q_{2\gamma}$} is the scalar
\begin{equation}
\label{eqn:q_scattering}
Q_{2\gamma} := \frac{2}{n-2\gamma}P_{2\gamma}(1) .
\end{equation}
In particular, we emphasize that this definition produces a well-defined invariant in the critical case $2\gamma=n$, with the corresponding $Q$ curvature 
a generalization of Branson's Q-curvature (in the case when $ 2 \gamma = n =4$.
see \cite {br93},\cite{GZ03}).\\

In this paper we will only deal with the special case when $ \gamma = \frac {3}{2}$ and the corresponding GJMS operator $P_{3}$ of order 3. We will also treat
$P_3$ as the``boundary operator" of the correponding GJMS operator. 
In the case $2\gamma =4$, the GJMS operator $P_{4}$ is also called 
\emph{Paneitz operator} and is an elliptic operator of  
order four defined on the manifold $X^{d}$ (see \cite{Paneitz}) given by
\begin{equation}
\label{P4}
(P_4)_{g}  = (-\Delta_g) ^2 \, +\, \delta_{g} \left((4 A_{g} -(d-2) J_{g} g)(\nabla\cdot,\cdot)\right) \,+ \, \frac {d-4}{2} (Q_4)_{g},
\end{equation} 
where $\delta$ denotes divergence, $\nabla$ denotes gradient on functions,  $A_{g}$ the Schouten tensor $A = \frac {1}{d-2} (\ric -J_{g} g)$,  $J_{g} =
\frac {1}{2(d-1)} R_{g}$, $R_{g}$ is the scalar curvature of the metric $g$ and $Q_{4}$ is the $Q$-curvature 
\begin{equation}
\label{Q4}
(Q_4)_g = - \Delta_g J_g + \frac {d}{4} J_g^2 - 2 |A_g|_g^2.
\end{equation}
We remark that in general, properties and formulas 
for $P_{2k}$ and $Q_{2k}$ have been recursively derived (\cite{Juhl}, \cite{FG13}).
The conformal invariance property of $(P_4,Q_4)$, which is a special
case of \eqref{eqn conformal covariant} is best expressed as
\begin{equation}
\label{eqn conf4d} 
(P_4)_{\hat g}  U = (\psi)^{ \frac{d+4}{d-4}} \,(P_4)_g ( \psi U),
\end{equation}
for all smooth function $U$ defined on manifolds of dimension $ d \geq 5$, where $ \hat g \in [g]$ denotes
the metric $ \hat g = (\psi)^{ \frac {4}{d-4}} g$. 
and 
\begin{equation}
\label{eqn conf4}
(P_4)_{\hat g}  U = e^{ - 4 \tau} (P_4)_g (U),
\end{equation}
for all smooth function $U$ defined on manifolds of dimension $ d =4$, where 
$ \hat g \in [g]$ denotes
the metric $ \hat g = e^{2\tau} g $. 
\subsection{Adapted metrics when  $\gamma = \frac{3}{2}$}
In the earlier paper of Case-Chang \cite{CC14}, we have introduced a special
metric called ``adapted metric" to compactify a given conformally compact
Poincare Einstein manifold $(X^{n+1}, \partial X, g_{+})$. The metric was introduced for any parameter
$ s = \frac {2}{n} + \gamma$ for $ \gamma \in (0, \frac {n}{2})$ and for
$ s=n$ when $n$ is an odd integer.
For such an $s$, we consider the Poisson equation \eqref{eqn:poisson_equation}
with Dirichlet data $ f \equiv 1$, and denote the solution of the equation by
$v_s$. By a result of J. Lee \cite{L95}, if one assumes further that the Yamabe
constant of the boundary metric $h$ is positive (this happens when the scalar
curvature of some metric in $[h]$ is positive), then there is no point 
specturm of $ - \Delta_{g_{+}}$ and hence $v_s >0 $ on $X$. Thus we can take
$y_s := (v_s)^{\frac {1}{n-s} }$ as a defining function, and $g_s = y_s^2 g_{+}$
is defined as the ``adapted metric". In the limiting case, when  $ s = n$ and
$n$ is an odd integer, i.e. when $ \gamma = \frac{n}{2}$, the adapted metric
as appeared in the literature in the work of Fefferman-Graham (\cite{FG02}, see 
also \cite{CQY06}) and is defined as follows:
\begin{align}
\tau = - \frac{d}{ds}|_{s=n} v_s, 
\end{align}
then $\tau$ satisfies 
\begin{align}\label{eqw}
- \Delta_{g_{+}}\tau = n,
\end{align}
and the adapted metric is defined as $ g^{*} = e^{2\tau} g_{+}$.\\

\indent The adapted metric for all $s$ satisfies many good curvature
properties stated in terms of  ``smooth metric with measures". Here we will 
just cite the properties when $s= \frac n2 + \frac 32$ satisfied by the metric $g^* := g_{ \frac {n}{2} + \frac{3}{2} }$ which we will use in the rest of this paper.    
\begin{proposition}[Properties of $g^*$]\label{properties} See Lemma 6.2 and Lemma 7.6 in \cite{CC14}. 
\begin{enumerate}
\item[(i)] It is a metric with totally geodesic boundary.
\item[(ii)] $Q_4(g^{*}) \equiv 0.$
\item[(iii)]We define the energy of $U$ with respect to the Paneitz operator $(P_{4})_{g}$ as the integral quantity obtained by 
dropping the boundary terms when integrating by parts of the integral $\int_{X^{d}}\left\{(P_{4})_{g^*}U\right\}UdV_{g^*}$, i.e.,
\begin{align}\label{paneitzenergy}
E_{4}(g^*)[U]=\int_{X^d}(\Delta_{g^{*}} U)^2 -(4 A_{g^{*}} - (d-2) J_{g^{*}} g^{*}) \langle \nabla_{g^*} U, \nabla_{g^*} U\rangle  dV_{g^*}
\end{align}
For all smooth function $f$ defined on $\partial X^{d}$, we have the identity 
\begin{align}
\label{eqn e4}
 \frac {1}{2} E_4(g^{*})[U_f] = \int_{\partial X} P_3 f f d\sigma_{g^{*}}-\frac{d-4}{2} \int_{\partial X} Q_3 (g^{*}) f^2d\sigma_{g^{*}}.
\end{align}
where $U_f$ denotes the (unique) solution of the equation:
\begin{align}
(P_4)_{g^{*}} U_f= 0&~\text{on}~ X^d,\label{p40} \\
U_f = f &~\text{on} ~\partial X,\label{uff}\\
\frac {\partial U_f} {\partial n_{g^{*}}} = 0&~ \text{on}~ \partial X.\label{duf}
\end{align}
\end{enumerate}
\end{proposition}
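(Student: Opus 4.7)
The plan is to verify each of the three properties by combining the Fefferman--Graham asymptotic expansion of the Poisson solution $v_s$ with the explicit factorization of the Paneitz operator on the Einstein end, together with a careful integration by parts for (iii).

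For (i), I would fix a representative $h$ of the conformal infinity and introduce the associated geodesic defining function $r$, so that $g_+ = r^{-2}(dr^2 + h_r)$ with $h_r = h + r^2 h^{(2)} + \ldots$ admitting only even powers of $r$. Since $\gamma = 3/2 \notin \mathbb{N}$, the Poisson solution has the expansion \eqref{eqn:poisson_asymptotics}--\eqref{eqn:F_expansion}, namely $v_s = F r^{n-s} + H r^s$ with $F = 1 + f_{(2)} r^2 + \ldots$ even in $r$. From this one obtains $y_s = v_s^{1/(n-s)} = r(1 + O(r^2))$ as an even defining function to leading order, and the identity $|d\log v_s|_{g_+}^2|_{\partial X} = (n-s)^2$ (read off from the expansion) gives $|dy_s|_{g^*}^2|_{\partial X} = 1$, so $y_s$ is a geodesic defining function for $g^*$ at the boundary. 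In the resulting $g^*$-geodesic collar form the first $r$-derivative of the induced boundary metric vanishes, which is equivalent to the vanishing of the second fundamental form.

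For (ii), the plan is to exploit the conformal covariance \eqref{eqn conf4d}. The arithmetic $d = n+1$ and $s = n/2 + 3/2$ yields $(d-4)/(2(n-s)) = 1$, so the conformal factor $\psi = y_s^{(d-4)/2}$ equals $v_s$ itself, and
\begin{align*}
\tfrac{d-4}{2}\, Q_4(g^*) \;=\; (P_4)_{g^*}(1) \;=\; v_s^{-(d+4)/(d-4)}\,(P_4)_{g_+}(v_s).
\end{align*}
I would next use the Einstein structure $\ric(g_+) = -(d-1) g_+$ to factor
\begin{align*}
(P_4)_{g_+} \;=\; \Bigl(-\Delta_{g_+} - \tfrac{d(d-2)}{4}\Bigr)\Bigl(-\Delta_{g_+} - \tfrac{(d-4)(d+2)}{4}\Bigr),
\end{align*}
at which point the Poisson eigenvalue $s(n-s) = (d-4)(d+2)/4$ annihilates the second factor, so $(P_4)_{g_+}(v_s) \equiv 0$. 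This gives $Q_4(g^*) \equiv 0$ for $d > 4$. The critical case $d = 4$ would be handled via the limiting $g^* = e^{2\tau} g_+$ with $-\Delta_{g_+}\tau = n$, applying the dimension-$4$ conformal variation formula for $Q_4$.

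For (iii), I would start from $(P_4)_{g^*} U_f = 0$ (equation \eqref{p40}), multiply by $U_f$, and integrate by parts on $X^d$. The bulk contribution reproduces $E_4(g^*)[U_f]$ from \eqref{paneitzenergy} by construction, since the definition drops exactly the boundary terms generated in integration by parts. The surviving boundary contributions split according to the Dirichlet data $f = U_f|_{\partial X}$ in \eqref{uff} and the Neumann data $\partial_n U_f|_{\partial X} = 0$ in \eqref{duf}, leaving a contribution driven by the third-order normal derivative of $U_f$ at $\partial X$. The scattering-theoretic definition \eqref{eqn:scattering_definition} of $P_3 = d_{3/2} S(n/2 + 3/2)$ identifies this third-order coefficient with $P_3 f$, modulo a $Q_3$-correction accounting for $P_3$'s action on constants via \eqref{eqn:q_scattering}. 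Rearranging then yields \eqref{eqn e4}.

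The hard part will be the boundary bookkeeping in (iii): the Schouten-weighted gradient term of the Paneitz operator, even on a totally geodesic boundary, contributes a nontrivial boundary piece under integration by parts on $g^*$, and one must carefully match the remaining third-order normal data of $U_f$ with the scattering description of $P_3$ on $\partial X$. Once this matching is executed, the identity \eqref{eqn e4} assembles from the $P_3 f$ pairing plus the $Q_3$ correction.
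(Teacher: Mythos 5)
The paper does not actually prove Proposition~\ref{properties}; it cites Lemma~6.2 and Lemma~7.6 of Case--Chang \cite{CC14}. So the relevant comparison is with that reference, not with an argument in this text.

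Your sketches for (i) and (ii) do capture the right mechanisms. For (ii), identifying $g^* = v_s^{4/(d-4)}\,g_+$, invoking the Einstein-manifold factorization $(P_4)_{g_+} = \bigl(-\Delta_{g_+} - \tfrac{d(d-2)}{4}\bigr)\bigl(-\Delta_{g_+} - s(n-s)\bigr)$, and then using the Poisson equation to kill the second factor on $v_s$ is indeed how $Q_4(g^*)\equiv 0$ is obtained, and the arithmetic $s(n-s)=\tfrac{(d-4)(d+2)}{4}$ checks out. For (i), arguing from the even expansion of $v_s$ that $y_s = r\bigl(1+O(r^2)\bigr)$ so the induced second fundamental form vanishes is sound for $d>4$, but you only mention the $d=4$ limiting construction $g^*=e^{2\tau}g_+$ for part (ii); part (i) also needs it, since $y_s = v_s^{1/(n-s)}$ is undefined when $n=s$.

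The substantive claim is (iii), and there your proposal stops precisely where the work starts. Multiplying $(P_4)_{g^*}U_f$ by $U_f$ and integrating by parts is of course the opening move, and the Dirichlet/Neumann conditions do remove several boundary terms; but your key assertion --- that the surviving third-order normal datum ``is identified with $P_3 f$ by scattering'' --- is exactly the statement that needs a proof, and it does not follow from the integration by parts alone. What is missing is the link between the fourth-order boundary value problem $(P_4)_{g^*}U_f=0$, $U_f|_{\partial X}=f$, $\partial_n U_f|_{\partial X}=0$, and the second-order Poisson problem that defines $S(s)$ and hence $P_3$. Concretely, one must show that the conformally transformed function $v_s U_f$ coincides with $\mP(s)f$ (the Paneitz factorization from (ii) only gives that $(-\Delta_{g_+}-s(n-s))(v_s U_f)$ lies in the kernel of the other factor; one then has to argue from the Neumann condition and the asymptotics that this quantity vanishes), and then read off from the $r^{s}$-coefficient of $v_s U_f$ how the $P_3 f$ pairing and the $Q_3(g^*)f^2$ term arise. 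You flag the ``boundary bookkeeping'' as the hard part; it is, and in \cite{CC14} it is the content of the proof, not an afterthought. As written, your (iii) asserts the identity~\eqref{eqn e4} rather than deriving it.
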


\subsection{Explicit formula for $g^*$ metric on $(B^d, S^{d-1}, g_H)$}\label{explicit} In this section we derive explicit formulas for the metric $g^{*}$ in the 
model case of $(B^d, S^{d-1}, g_H) $, which happens to be computable. 
\begin{proposition}\label{proppsi} On the model case $(B^d, S^{d-1}, g_H) $, i.e., for $g_{H}=\rho^{-2}|dx|^2$, $ \rho = \frac{1-|x|^2}{2}  $, we have
\begin{enumerate}
\item[(i)] When $d \geq 5$, $g^{*} = (\psi)^{ \frac {4}{d-4}} |dx|^2$, where
\begin{align*}
 \psi = 1 + \frac{d-4}{2} \rho.
 \end{align*}
\item[(ii)] When $d =4$, $ g^{*} = e^ {2 \rho} |dx|^2 = e^{ (1- |x|^2) } |dx|^2$.
\end{enumerate}
\end{proposition}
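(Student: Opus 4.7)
The strategy is to exploit the rotational symmetry of $(B^d,g_H)$: since the Dirichlet data $f\equiv 1$ used to define $v_s$ (respectively $\tau$) is $O(d)$-invariant, uniqueness forces the solution to be a radial function of $\rho$ alone, and the Poisson equation reduces to an ODE. Using $g_{+}=\rho^{-2}|dx|^2$, $\partial_i\rho=-x_i$, and $|x|^2=1-2\rho$, the conformal transformation law for the Laplacian yields, for any radial $u=u(\rho)$,
\[
-\Delta_{g_+} u \;=\; -\rho^2(1-2\rho)\,u''(\rho) + \rho\bigl[(d-2)-(d-4)\rho\bigr]u'(\rho).
\]
Since $\rho$ vanishes simply on $\partial B^d$, the required boundary normalization from Proposition~\ref{properties} selects the solution whose leading behavior as $\rho\to 0$ is $\rho^{n-s}$ with coefficient one.

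For part (i), take $n=d-1$, $s=\tfrac{d+2}{2}$, so that $n-s=\tfrac{d-4}{2}$ and $s(n-s)=\tfrac{(d+2)(d-4)}{4}$. I would verify by direct coefficient matching that the ansatz
\[
v_s(\rho) \;=\; \rho^{(d-4)/2}\Bigl(1+\tfrac{d-4}{2}\rho\Bigr)
\]
solves the radial equation $-\Delta_{g_+}v_s = s(n-s)v_s$; only the two monomials $\rho^{(d-4)/2}$ and $\rho^{(d-2)/2}$ contribute nontrivially, and the third potential power $\rho^{d/2}$ cancels automatically. Its leading behavior $v_s\sim\rho^{n-s}$ with unit coefficient identifies it as the correct $v_s$ (positivity of the Yamabe constant of $S^{d-1}$ and Lee's theorem rule out point-spectrum obstructions). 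Writing $\psi := 1 + \tfrac{d-4}{2}\rho$, the associated defining function becomes $y_s = v_s^{1/(n-s)} = \rho\,\psi^{2/(d-4)}$, so
\[
g^* \;=\; y_s^{2}\,g_+ \;=\; \rho^2\,\psi^{4/(d-4)}\cdot\rho^{-2}|dx|^2 \;=\; \psi^{4/(d-4)}|dx|^2,
\]
as claimed.

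For part (ii), $s=n=3$ is the critical case and the adapted metric is defined through $\tau=-\tfrac{d}{ds}\big|_{s=n}v_s$, which must satisfy $-\Delta_{g_+}\tau = n = 3$ by \eqref{eqw}. I would propose the ansatz $\tau(\rho)=\log\rho+\rho$: substituting into the radial expression for $-\Delta_{g_+}$ with $d=4$, the $\rho^{-2}$, $\rho^{-1}$, constant, and $\rho$ contributions collapse to exactly $3$, confirming the equation. It then follows that
\[
g^*=e^{2\tau}g_+ \;=\; \rho^{2}e^{2\rho}\cdot\rho^{-2}|dx|^2 \;=\; e^{2\rho}|dx|^2 \;=\; e^{1-|x|^2}|dx|^2.
\]
The main obstacle is not the verification itself but confirming that \emph{this} particular $\tau$, rather than some other solution of $-\Delta_{g_+}\tau=n$ differing by a harmonic term, is the one picked out by the limit definition. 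I would handle this by tracking the asymptotic expansion of $v_s$ in the two indicial roots $\rho^{n-s}$ and $\rho^{s}$, which coalesce as $s\to n$ and produce precisely the $\log\rho$ contribution upon differentiation; radial symmetry together with smooth extension of $\tau-\log\rho$ across $\partial B^4$ then pins $\tau$ down uniquely, modulo an additive constant that can be absorbed into the overall conformal factor defining $g^*$.
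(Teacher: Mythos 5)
Your proof is correct, and it follows a genuinely different computational route than the paper. Where the paper passes to the compactified picture and invokes the Chang--Gonzalez reformulation $-\delta_{g_0}(\rho^{a}\nabla\psi)+\EE(\rho)\psi=0$, explicitly computes $\EE(\rho)=-(d-4)\rho^{-3}$, and then verifies the ODE for $\psi$ in the radial variable $r$, you instead work directly in the hyperbolic metric: you reduce $-\Delta_{g_+}$ to a second-order ODE in $\rho$ for radial functions, propose $v_s=\rho^{(d-4)/2}\bigl(1+\tfrac{d-4}{2}\rho\bigr)$, and verify the Poisson equation by matching the coefficients of $\rho^{(d-4)/2}$, $\rho^{(d-2)/2}$ (the crucial identity being $\tfrac{d(d-2)}{4}-2=\tfrac{(d+2)(d-4)}{4}=s(n-s)$) and checking the $\rho^{d/2}$ coefficient vanishes; only then do you form $y_s=v_s^{1/(n-s)}=\rho\,\psi^{2/(d-4)}$ and read off $g^*$. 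Your route avoids the Chang--Gonzalez machinery entirely and is arguably more self-contained; the paper's route has the advantage of directly isolating the boundary condition $\psi|_{\partial B}=1$ and matching the framework used elsewhere in the literature. For $d=4$ the two proofs coincide up to changing the radial variable from $r$ to $\rho$, though you add a welcome point the paper leaves implicit: the $\log\rho$ in $\tau$ arises from the coalescence of the indicial roots $\rho^{n-s}$ and $\rho^s$ as $s\to n$, and radial symmetry plus smooth extension of $\tau-\log\rho$ across $\partial B^4$ pins down $\tau$ modulo an additive constant. One small point worth making explicit for completeness: $\rho$ is not the geodesic defining function, but since $\rho/r\to 1$ on $S^{d-1}$ (both compactify to the round metric on the boundary), the normalization ``leading coefficient of $\rho^{n-s}$ equals $1$'' correctly identifies the scattering solution, and the resulting $y_s$ and $g^*$ are independent of the choice of defining function used to exhibit $v_s$.
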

\begin{remark}\em{
The metric $g^*$ in (ii)  is called \emph{Fefferman-Graham metric} and has been described in several previous papers
including \cite{FG02,CQY06,CC14}. The Fefferman Graham metric is constructed via scattering theory and satisfies several important 
properties, including 
\begin{itemize}
\item $S^{d-1}$ is totally geodesic under $g^*$
\item The $Q$-curvature of $g^{*}$ is zero
\item $R_{g^*}>0$. 
\end{itemize}
}
\end{remark}
\begin{proof}
(i) Denote $ d = n+1$, and $ s = \frac{n}{2} + \frac {3}{2} $, by our definition,
 $ g^{*} = {v_s^{\frac {2}{n-s}} } g_H $, where $g_H$ is the hyperbolic metric on
$B^d$ and $v_s$ is the unique solution of the Poisson equation with Dirichlet
data $ f \equiv 1$. 
\begin{equation}
\label{poisson_equationB}
-\Delta_{g_H}v_s - s(n-s)v_s = 0 \qquad\text{in $B$}
\end{equation}
We now recall $g_H = \rho^{-2} g_0$, where $\rho = \frac{ 1-|x|^2}{2}$ and
$g_0 = |dx|^2$ the flat metric on $B^d$. Using this notation, we find that
$ g^{*} = \psi^{\frac{4}{d-4}} g_0$ and where $ \psi = v_s \rho^ {- \frac{d-4}{2}}
= v_s \rho ^{s-n}$. One of the main observations in the paper of Chang-Gonzalez
(\cite {CG11} ) is that in the compactified metric $g_0 = \rho^2 g_H$, equation
\eqref{poisson_equationB} is equivalent to the equation
\begin{align}\label{chang_gonzalez}
-\delta_{g_0} (\rho^{a}\nabla \psi)+\EE(\rho) \psi=0,
\end{align}
where $ a = 1 - 2 \gamma = -2$ in our case. The error term $\EE(\rho)$ is given by
\begin{align}
\EE(\rho)=-\rho^{a/2}\left(\Delta_{g_0}\rho^{a/2}\right)+\left(\gamma^{2}-\frac{1}{4}\right)\rho^{a-2}+\frac{n-1}{4n}R_{g_0}\rho^{a},
\end{align}
which simplifies to 
\begin{align*}
\EE(\rho)=-\Delta_{g_{0}}(\rho^{-1})\rho^{-1}+2\rho^{-4}.
\end{align*}
Note that  
\begin{align*}
\frac{\partial}{\partial r}\rho=-r,\\
\frac{\partial^{2}}{\partial r^{2}}\rho=-1. 
\end{align*}
Observe also that
\begin{align*}
\Delta_{g_{0}}\rho^{-1}=\frac{\partial^{2}}{\partial r^{2}}\rho^{-1}+(d-1)r^{-1}\frac{\partial}{\partial r}\rho^{-1},
\end{align*}
and 
\begin{align*}
\frac{\partial}{\partial r}\rho^{-1}&=r\rho^{-2},\\
\frac{\partial^{2}}{\partial r^{2}}\rho^{-1}&=\rho^{-2}+2r\rho^{-3}, 
\end{align*}
so that $\Delta_{g_{0}}\rho^{-1}=\frac{2}{\rho^{3}}+\frac{d-4}{\rho^{2}}$ and 
\begin{align*}
\EE(\rho)=-\rho^{-1}\Delta_{g_{0}}(\rho^{-1})+2\rho^{-4}=-\frac{d-4}{\rho^{3}}. 
\end{align*}
Observe that in this case, it is reasonable to expect that $\psi$ will be a radial function. Assuming so, we can compute the term 
\begin{align*}
-\delta_{g_{0}}(\rho^{-2}\nabla \psi)&=-2r\rho^{-3}\psi^{'}+\rho^{-2}\Delta_{g_{0}}\psi\\
&=-2r\rho^{-3}\psi^{'}-\rho^{-2}\left(\psi^{''}+\frac{d-1}{r}\psi^{'}\right)\\
&=\frac{4}{r}\rho^{-3}\left(\rho-\frac{1}{2}\right)\psi^{'}-\rho^{-2}\left(\psi^{''}+\frac{d-1}{r}\psi^{'}\right)\\
&=-\rho^{-2}\psi^{''}-\frac{d-5}{r}\rho^{-2}\psi^{'}-\frac{2}{r}\rho^{-3}\psi^{'}
\end{align*}
and equation \eqref{chang_gonzalez} is equivalent to 
\begin{align}\label{psiode}
-\rho^{-2}\psi^{''}-\frac{d-5}{r}\rho^{-2}\psi^{'}-\frac{2}{r}\rho^{-3}\psi^{'}-(d-4)\rho^{-3}\psi=0.
\end{align}
We can now verify that $\psi = 1 + \frac{d-4}{2} \rho$ is a solution of the ODE \eqref{psiode}
with the desired boundary condition $\psi|_{\partial B} \equiv 1$. This finishes the proof of (i).  

We now prove (ii). When $d=4$, one can either follow a procedure similar as (i) to compute 
explicitly the solution of the PDE $- \Delta_{g_H} \tau = d-1 = 3 $ on $B^4$. Or denote 
$r=|x|$, we find that a radial solution of $- \Delta_{g_H} \tau = d-1 = 3$ 
satisfies the equation 
\begin{align*}
\frac{\partial^{2}}{\partial r^{2}}\tau(r)+\frac{6r}{r^{2}-4}\frac{\partial}{\partial r}\tau(r)-\frac{2}{r}\frac{\partial}{\partial r}\tau(r)=-\frac{3}{r^{2}},
\end{align*}
and a solution to this equation satisfying the boundary condition $e^{2\tau}g_{H}|_{S^{3}}=g_{S^{3}}$ is given by 
\begin{align*}
\tau(x)=\log\left(\frac{1-|x|^2}{2}\right)+\frac{1-|x|^2}{2}. 
\end{align*}
Our adapted metric will then be $g^{*}=e^{2\tau}\rho^{-2} g_{0}=e^{(1-|x|^{2})}g_{0}$. 
We remark alternatively, we can find the metric $g^{*}$ in dimension $d=4$ by a ``dimension continuity" argument of Branson, by computing the limit  
\begin{align*}
e^{2\tau} \rho^{-2} =\lim_{d \rightarrow 4} (\psi_d)^{ \frac {4}{d-4} } = e^ {2 \rho}.
\end{align*}
\end{proof}

\begin{remark}\em{ As pointed out in the introduction, if in \eqref{p40} we replace the Paneitz $P_{4}$ operator by the conformal Laplacian and carry out the construction in Proposition \ref{proppsi} , we obtain $g^{*}=g_{0}$. More precisely, if we consider the system
\begin{align}
L_{g^*}U_{f}&=0,~\text{in}~B^{d}\label{Leq}\\
U_{f}&=f,~\text{on}~S^{d-1},\label{Uff2}\\
\frac{\partial}{\partial n_{g^*}}U_{f}&=0~\text{on}~S^{d-1}\label{dUf2},
\end{align}
where $L_{g^*}=-\Delta_{g^*}+\frac{(d-2)R}{4(d-1)}$, then
\begin{enumerate}
\item[(i)] when $d\ge 3$, $g* = (\psi)^{2/(d-2)}g_{0}$, then $\psi \equiv 1$ by solving the system \eqref{Leq}-\eqref{dUf2} at $s = n/2 + 1/2$ for the Dirichlet data $f \equiv 1$). 
\item[(ii)]when $d=2$, by solving \eqref{eqw} for $n=1$ on $\mathbb{H}^2$, then $g* = e^{2\tau} g_H = g_{0}$.
\end{enumerate}
}
\end{remark}

\section{Energy identity for the model case in dimension $d>4$}\label{energydge4}
We now write the energy identity in Proposition \ref{properties} for the metric $g^*$ in the model case $(B^{d},S^{d-1},g_{H})$ for $d>4$ using the results of section \ref{explicit}. Recall that in this case the adapted metric is given by 
$g^{*}=\psi^{\frac{4}{d-4}}g_{0}$ where $\psi=1 + \frac{d-4}{2}\rho$ and $\rho(x)=\frac{1-|x|^2}{2}$. Observe that from the definition of the energy $E(g)$ in Proposition \ref{properties} as the functional obtained by dropping the boundary terms when integrating by parts in $\int_{X^{d}}\left\{(P_{4})_{g}u\right\}udV_{g}$ we have 
\begin{align}\label{explicitE}
E_{4}(g)[u]=\int_{X^{d}}u(P_{4})_{g}u dV_{g}+\II_{1}(g)[u]-\II_{2}(g)[u]-\II_{3}(g)[u],
\end{align}
where $\II_{1}$, $\II_{2}$ and $\II_{3}$ defined for general $u$ by 
\begin{align}
\II_{1}(g)[u]&=\oint_{M^{d-1}}\Delta_{g}u\frac{\partial}{\partial n_{g}}u d\sigma_{g},\label{I1}\\
\II_{2}(g)[u]&=\oint_{M^{d-1}}u\frac{\partial}{\partial n_{g}}(\Delta_{g}u)d\sigma_{g},\label{I2}\\
\II_{3}(g)[u]&=\oint_{M^{d-1}}uh(n_{g},\nabla u)d\sigma_{g},\label{I3}
\end{align}
with $h=4\left(A_{g}-(d-2)Jg\right)$. In view of \eqref{explicitE}, let us write the energy identity \eqref{eqn e4} in Proposition \ref{properties} as 
\begin{align}
\int_{B^{4}}U_{f}(P_{4})_{g^*} U_{f}dV_{g^*}&+\II_{1}(g^{*})[U_{f}]-\II_{2}(g^{*})[U_{f}]+\II_{3}(g^{*})[U_{f}]\label{E1}\\
&=2\oint_{S^{d-1}}P_{3}fd\sigma-(d-4)\oint_{S^{d-1}}Q_{3}f^{2}d\sigma\label{E2},
\end{align}
where $U_{f}$ satisfies \eqref{p40}-\eqref{duf}. For the following three lemmas, we will consider functions $u$ (which includes the case when $u=U_f$),  which satisfies the following two conditions
\begin{align}
u|_{\partial B^4}&=f,\label{udir}\\
\frac{\partial}{\partial n_{g^{*}}}u&=0.\label{uneumann} 
\end{align}
Note that on $S^{d-1}$ we have $n_{g_{0}}=n_{g^{*}}$, thus the Neumann boundary condition \eqref{uneumann} can also be written in terms of $g_{0}$.
We start by noting the following lemma that holds for all dimensions $d\ge 4$. 
\begin{lemma}\label{I3lemma}
Let $d\ge 4$. For the metric $g^{*}$ and $u$ satisfying \eqref{udir} and \eqref{uneumann} we have $\II_{3}(g^{*})[u]=0$ for any function $f\in C^{\infty}(S^{d-1})$.  
\end{lemma}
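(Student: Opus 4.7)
The plan is to show that the integrand $u \cdot h(n_{g^{*}}, \nabla u)$ in the definition \eqref{I3} of $\II_{3}(g^{*})[u]$ vanishes pointwise along $\partial B^{d} = S^{d-1}$. Two structural facts drive the argument: the Neumann condition \eqref{uneumann} forces the gradient $\nabla_{g^{*}} u$ to lie in $TS^{d-1}$ at boundary points, and Proposition \ref{properties}(i) says $\partial B^{d}$ is totally geodesic for $g^{*}$, which will force the mixed tangent-normal block of $\ric_{g^{*}}$ to vanish on the boundary.

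The first step is to split
\[
h(n_{g^{*}}, \nabla u) \;=\; 4\, A_{g^{*}}(n_{g^{*}}, \nabla u) \;-\; 4(d-2)\, J_{g^{*}}\, g^{*}(n_{g^{*}}, \nabla u).
\]
By \eqref{uneumann}, $g^{*}(n_{g^{*}}, \nabla u) = \partial u/\partial n_{g^{*}} = 0$, which kills the second summand outright. For the first summand, the same Neumann condition says $\nabla u \in TS^{d-1}$ along $\partial B^{d}$; then, writing $A_{g^{*}} = \tfrac{1}{d-2}(\ric_{g^{*}} - J_{g^{*}}\, g^{*})$ and using $g^{*}(n_{g^{*}}, X) = 0$ for $X$ tangent, the whole problem reduces to verifying
\[
\ric_{g^{*}}(n_{g^{*}}, X) \;=\; 0 \qquad \text{along $\partial B^{d}$, for every } X \in TS^{d-1}.
\]

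The main (and essentially only substantive) step is this vanishing, which I plan to handle by the standard Codazzi argument for a totally geodesic hypersurface. Pick an orthonormal frame $\{e_{1}, \dots, e_{d-1}, e_{d} = n_{g^{*}}\}$ along $\partial B^{d}$. Using antisymmetry in the first two slots together with the pair-symmetry $R(A,B,C,D) = R(C,D,A,B)$ of the Riemann tensor,
\[
\ric_{g^{*}}(n_{g^{*}}, X) \;=\; \sum_{j=1}^{d-1} R_{g^{*}}(e_{j}, n_{g^{*}}, X, e_{j}) \;=\; \sum_{j=1}^{d-1} R_{g^{*}}(X, e_{j}, e_{j}, n_{g^{*}}).
\]
Proposition \ref{properties}(i) says that the second fundamental form $\mathrm{II}$ of $\partial B^{d}$ in $(B^{d}, g^{*})$ vanishes identically, hence so do its tangential covariant derivatives, and the Codazzi equation then gives $R_{g^{*}}(X, Y, Z, n_{g^{*}}) = 0$ for all tangent $X, Y, Z$. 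Every summand above therefore vanishes, which completes the reduction and proves the lemma. No serious technical obstacle is anticipated: the statement is really an algebraic consequence of the totally geodesic boundary combined with the Neumann boundary condition.
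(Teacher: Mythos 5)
Your proof is correct, and it takes a genuinely different route to the key vanishing from the one in the paper. Both arguments agree on the easy reduction: the Neumann condition kills the $J_{g^*}g^*$ piece and leaves you with showing $\ric_{g^{*}}(n_{g^*},\nabla u)=0$ along $S^{d-1}$. You then invoke Proposition \ref{properties}(i) (totally geodesic boundary) and run a Codazzi argument: $\mathrm{II}\equiv 0$ on the boundary forces its tangential covariant derivatives to vanish, so $R_{g^*}(X,Y,Z,n_{g^*})=0$ for tangent $X,Y,Z$, and summing over a tangential frame kills $\ric_{g^*}(n_{g^*},X)$. The paper instead works directly from the explicit conformal factor: it writes $g^*=e^{2\tau}g_0$ with $\tau$ radial, plugs into the conformal transformation law for Ricci, and observes that a flat-background conformal change by a radial factor gives $\ric_{g^*}=\theta\, dr\otimes dr+\omega\, g_0$ with $\theta,\omega$ radial, which pairs to zero against a boundary-tangent gradient. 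Your version is more conceptual and immediately portable to any $(X,\partial X,g)$ with totally geodesic boundary, with no need to know $g^*$ explicitly; the paper's version is more self-contained in the sense that it doesn't lean on Proposition \ref{properties}(i) at all, extracting the vanishing directly from the radial structure established in Proposition \ref{proppsi}. One small observation: you would do well to state explicitly that the Neumann condition \eqref{uneumann} (equivalently the paper's \eqref{duf}) holds along all of $S^{d-1}$, not just at a point, since that is what guarantees $\nabla u$ is a genuine tangent field to $S^{d-1}$ there; you use this both to kill $g^*(n_{g^*},\nabla u)$ and to feed $\nabla u$ into the Codazzi vanishing as a tangent slot.
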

\begin{proof}
Recall that $\II_{3}(g^{*})[ u]=\displaystyle{\oint_{S^{d-1}}fh(n_{g^{*}},\nabla  u)d\sigma}$, where $h=4A_{g^*}-(d-2)J_{g^{*}}g^{*}$ and by virtue of \eqref{duf} we have $g^{*}(n_{g^{*}},\nabla  u)=0$, so that
\begin{align*}
h(n_{g^{*}},\nabla  u)=\frac{4}{d-2}\ric_{g^{*}}(n_{g^*},\nabla  u). 
\end{align*}
On the other hand, if for $d\ge 4$ we write $g^{*}=e^{2\tau}g_{0}$ where of course
\begin{align*}
\tau(x)=\left\{
\begin{array}{cc}
\rho(x)&~\text{for}~d=4,\\
\frac{2}{d-4}\log\left(1+\frac{d-4}{2}\rho(x)\right)&~\text{for}~d\ge 5,
\end{array}
\right.
\end{align*}
we have by the transformation law for the Ricci tensor the identity 
\begin{align*}
\ric_{g^{*}}=(d-2)\left(-\nabla^{2}_{g_{0}}\tau-\frac{1}{(d-2)}(\Delta_{g_{0}}\tau)g_{0}+d\tau\otimes d\tau-|\nabla \tau|^2g_{0}\right) 
\end{align*}
and since $\tau$ is radial we have 
\begin{align*}
\ric_{g^{*}}=\theta dn\otimes dn+\omega g_{0}, 
\end{align*}
for radial functions $\theta$ and $\omega$ and then clearly $\ric_{g^{*}}(n_{g^{*}},\nabla  u)|_{\partial B^4}=0$. This implies $\II_{3}(g^{*})[ u]=0$ as needed.  
\end{proof}
We now relate the boundary terms $\II_{1}(g^{*})[ u]$ and $\II_{2}(g^{*})[ u]$ to boundary integrals computed in terms of the Euclidean metric $g_{0}$. To be more precise, recall that 
the operator  $(P_{4})_{g}$ satisfies the following conformal covariance property: if we write $g^{*}=\psi^{\frac{4}{d-4}}g_{0}$ then 
  \begin{align*}
(P_{4})_{g^{*}}(u)=\psi^{-\frac{d+4}{d-4}}P_{g_{0}}(\psi u),
\end{align*}
and therefore
\begin{align*}
 &\int_{B^{d}}u(P_{4})_{g^*}(u)dV_{g^{*}}\\
 &=\int_{B^4}u\psi^{-\frac{d+4}{d-4}}(P_{4})_{g_{0}}(\psi u)\psi^{\frac{2d}{d-4}}dV_{g_{0}}\\
 &=\int_{B^{d}}\psi u(P_{4})_{g_{0}}(\psi u)dV_{g_{0}}\\
 &=\oint_{S^{d-1}}f\frac{\partial}{\partial n_{g_{0}}}(\Delta_{g_{0}}(u\psi))-\frac{\partial}{\partial n_{g_{0}}}(u\psi)\Delta_{g_{0}}(u\psi)d\sigma+\int_{B^{4}}(\Delta_{g_{0}}\psi u)^{2}dV_{g_{0}},\\
 &=\II_{1}(g_{0})[\psi u]-\II_{2}(g_{0})[\psi u]+\int_{B^{4}}(\Delta_{g_{0}}\psi u)^{2}dV_{g_{0}},
\end{align*}
and since $\displaystyle{\int_{B^{4}}(\Delta_{g_0}\psi u)^{2}dV_{g_0}}$ is the Paneitz energy of $\psi u$ with respect to $g_{0}$ , we are interested in computing the difference
\begin{align*}
\II_{1}(g^*)[u]-\II_{2}(g^{*})[u]-\left(\II_{1}(g_{0})[\psi u]-\II_{2}(g_{0})[\psi u]\right).
\end{align*}
We will do this not for general $u$ but for $u= U_{f}$ satisfying \eqref{p40}-\eqref{duf}. We start with the following lemma
\begin{lemma}
For $ u$ satisfying \eqref{p40}-\eqref{duf} we have the identity
\begin{align*}
\II_{1}(g^{*})[ u]-\II_{1}(g_{0})[\psi  u]&=\frac{d-4}{2}\oint_{S^{d-1}}f\left(\frac{\partial^{2}}{\partial n^2_{g_{0}}} u+\tilde{\Delta} f\right)d\sigma
\\
&-d\frac{(d-4)^{2}}{4}\oint_{S^{d-1}} f^{2}d\sigma. 
\end{align*}
\end{lemma}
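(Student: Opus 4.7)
The plan is to exploit the fact that the Neumann condition $\partial_{n_{g^*}} u = 0$ makes the left-hand term $\II_1(g^*)[u]$ collapse to zero, and then to compute the right-hand boundary integral $\II_1(g_0)[\psi u]$ by carefully evaluating $\psi$, its normal and Laplace derivatives, at $S^{d-1}$.

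First I would observe that $\rho$ vanishes on $\partial B^d$, so $\psi|_{S^{d-1}} \equiv 1$. Consequently the conformal factor relating $g^*$ and $g_0$ is $1$ along the boundary, so the unit outward normals $n_{g^*}$ and $n_{g_0}$ agree there, as do the induced surface measures. Two immediate consequences: (a) $\II_1(g^*)[u] = \oint_{S^{d-1}} \Delta_{g^*} u \cdot \partial_{n_{g^*}} u \, d\sigma_{g^*} = 0$, and (b) $\partial_{n_{g_0}} u = 0$ on $S^{d-1}$ as well. Thus the problem reduces to computing $-\II_1(g_0)[\psi u]$.

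Next I would compute the boundary data for $\psi u$. Since $\psi = 1 + \tfrac{d-4}{2}\rho$ with $\rho = (1-|x|^2)/2$, an elementary calculation yields $\partial_{n_{g_0}} \rho|_{S^{d-1}} = -1$ and $\Delta_{g_0}\rho = -d$, hence
\[
\partial_{n_{g_0}} \psi\bigl|_{S^{d-1}} = -\tfrac{d-4}{2}, \qquad \Delta_{g_0}\psi\bigl|_{S^{d-1}} = -\tfrac{d(d-4)}{2}.
\]
Using $u|_{S^{d-1}} = f$ and $\partial_{n_{g_0}} u|_{S^{d-1}} = 0$, the product rule then gives
\[
\partial_{n_{g_0}}(\psi u)\bigl|_{S^{d-1}} = -\tfrac{d-4}{2} f,
\]
and the identity $\Delta_{g_0}(\psi u) = \psi \Delta_{g_0} u + 2\langle \nabla\psi, \nabla u\rangle + u \Delta_{g_0}\psi$ collapses on the boundary (the middle term vanishes because $\psi$ is radial and $\partial_r u|_{r=1}=0$) to
\[
\Delta_{g_0}(\psi u)\bigl|_{S^{d-1}} = \Delta_{g_0} u\bigl|_{S^{d-1}} - \tfrac{d(d-4)}{2} f.
\]

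Finally, I would use the polar decomposition of the Euclidean Laplacian $\Delta_{g_0} = \partial_r^2 + (d-1)r^{-1}\partial_r + r^{-2}\tilde{\Delta}$. Evaluating at $r=1$ and using $\partial_{n_{g_0}} u|_{S^{d-1}} = 0$, one gets $\Delta_{g_0} u|_{S^{d-1}} = \partial^2_{n_{g_0}} u + \tilde{\Delta} f$. Substituting everything,
\[
\II_1(g_0)[\psi u] = -\tfrac{d-4}{2}\oint_{S^{d-1}} f \bigl(\partial_{n_{g_0}}^2 u + \tilde{\Delta} f\bigr) d\sigma + \tfrac{d(d-4)^2}{4}\oint_{S^{d-1}} f^2 d\sigma,
\]
and subtracting this from $\II_1(g^*)[u] = 0$ yields the claimed identity.

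The calculation is mostly routine once the correct reductions are made; the only step that requires a bit of care is verifying that $n_{g^*} = n_{g_0}$ on the boundary (so that the Neumann condition can be transferred to $g_0$ and $\II_1(g^*)[u]$ vanishes), and correctly identifying which cross terms in $\Delta_{g_0}(\psi u)$ drop out from radiality plus the Neumann condition. Neither is a genuine obstacle.
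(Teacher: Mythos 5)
Your proof is correct and follows essentially the same route as the paper: use the Neumann condition and the fact that $\psi\equiv1$ on $S^{d-1}$ (so $n_{g^*}=n_{g_0}$ there) to kill $\II_1(g^*)[u]$, then evaluate $\II_1(g_0)[\psi u]$ directly from the boundary data of $\psi$ ($\psi'(1)=-\tfrac{d-4}{2}$, $\Delta_{g_0}\psi=-\tfrac{d(d-4)}{2}$) and the polar form of the Laplacian. The only cosmetic difference is that you assemble $\Delta_{g_0}(\psi u)$ first and multiply once, whereas the paper expands the product term by term; the content is identical.
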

\begin{proof}
On $S^{d-1}$ we have $n_{g^{*}}=n_{g_{0}}$ and $n_{g_{0}}$ can be identified with the normal direction $\frac{\partial}{\partial r}$. Since in addition $\II_{1}(g^{*})[ u]=0$, we start by noting
\begin{align*}
\II_{1}(g^{*}, u)-\II_{1}(g_{0})[\psi  u]=-\II_{1}(g_{0})[\psi u]=-\oint_{S^{d-1}}\frac{\partial}{\partial n_{g_{0}}}(\psi  u)\Delta_{g_{0}}(\psi  u)d\sigma
\end{align*}
and 
\begin{align*}
&=\frac{\partial}{\partial n_{g_{0}}}(\psi  u)\Delta_{g_{0}}( u\psi)|_{\partial B^4}\\
&=f\psi^{'}(1)\psi(1)\left(\frac{\partial^{2}}{\partial r^{2}} u|_{r=1}+\tilde{\Delta} u\right)+ u^{2}\psi^{'}(\Delta_{g_{0}}\psi)|_{r=1},\\
&=-\frac{d-4}{2} u\left(\frac{\partial^{2}}{\partial r^{2}} u|_{r=1}+\tilde{\Delta} u\right)+d\frac{(d-4)^{2}}{4} u^{2},\label{diffIg}\\
&=-\frac{d-4}{2}f\left(\frac{\partial^{2}}{\partial n_{g_{0}}^{2}} u+\tilde{\Delta}f\right)+d\frac{(d-4)^{2}}{4}f^{2}
\end{align*}
from where the result follows.  
\end{proof}
We also have 
\begin{lemma}
For any $u$ satisfying \eqref{udir} and \eqref{uneumann}, the following identity holds 
\begin{align*}
 \II_{2}(g^{*})[ u]-\II_{2}(g_{0})[\psi  u]=\frac{d}{2}\oint_{S^{d-1}}f\tilde{\Delta}fd\sigma+\frac{d-4}{2}\oint_{S^{d-1}}f\frac{\partial^{2}}{\partial n_{g_{0}}^{2}} ud\sigma.
\end{align*}
\end{lemma}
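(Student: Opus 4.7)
The plan is to do the computation directly in radial/polar coordinates, exploiting the fact that on $S^{d-1}$ the conformal factor $\psi = 1 + \frac{d-4}{2}\rho$ equals $1$, so that the induced metrics from $g^*$ and $g_0$ agree on the boundary, giving $n_{g^*} = n_{g_0}$ and $d\sigma_{g^*} = d\sigma$. In particular,
\begin{align*}
\II_2(g^*)[u]-\II_2(g_0)[\psi u] = \oint_{S^{d-1}} f\,\frac{\partial}{\partial n_{g_0}}\bigl(\Delta_{g^*}u - \Delta_{g_0}(\psi u)\bigr)\, d\sigma,
\end{align*}
so the task reduces to computing the boundary value of $\partial_r(\Delta_{g^*}u-\Delta_{g_0}(\psi u))$ at $r=1$.

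First I would expand each piece. Writing $g^* = e^{2\tau}g_0$ with $\tau = \frac{2}{d-4}\log\psi$, the standard conformal-change formula for the Laplacian gives
\begin{align*}
\Delta_{g^*}u = \psi^{-\frac{4}{d-4}}\Delta_{g_0}u + \tfrac{2(d-2)}{d-4}\psi^{-\frac{d}{d-4}}\,\nabla_{g_0}\psi\cdot\nabla_{g_0}u,
\end{align*}
while the product rule gives
\begin{align*}
\Delta_{g_0}(\psi u) = \psi\Delta_{g_0}u + u\Delta_{g_0}\psi + 2\nabla_{g_0}\psi\cdot\nabla_{g_0}u.
\end{align*}
A quick computation using $\rho'(r)=-r$, $\rho''=-1$ and $\Delta_{g_0}\rho = -d$ gives the data I need at $r=1$: $\psi(1)=1$, $\psi'(1)=-\tfrac{d-4}{2}$, $\psi''(1)=-\tfrac{d-4}{2}$, $\Delta_{g_0}\psi \equiv -\tfrac{d(d-4)}{2}$. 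Also, $\psi$ is radial so $\nabla_{g_0}\psi\cdot\nabla_{g_0}u = \psi'(r)\partial_r u$, and the Neumann condition \eqref{uneumann} (which, since $n_{g^*}=n_{g_0}$ on $S^{d-1}$, reads $\partial_r u|_{r=1}=0$) kills many terms.

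Differentiating each expression in $r$ and evaluating at $r=1$, the terms involving $\partial_r u$ vanish, and what remains is
\begin{align*}
\partial_r \Delta_{g^*}u\big|_{r=1} &= 2\,\Delta_{g_0}u + \partial_r \Delta_{g_0}u - (d-2)\,\partial_r^2 u,\\
\partial_r \Delta_{g_0}(\psi u)\big|_{r=1} &= -\tfrac{d-4}{2}\,\Delta_{g_0}u + \partial_r \Delta_{g_0}u - (d-4)\,\partial_r^2 u,
\end{align*}
so subtracting gives $\partial_r(\Delta_{g^*}u-\Delta_{g_0}(\psi u))|_{r=1} = \tfrac{d}{2}\Delta_{g_0}u - 2\,\partial_r^2 u$.

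Finally I would unwind $\Delta_{g_0}u$ on the sphere using the polar decomposition $\Delta_{g_0}u = \partial_r^2 u + \frac{d-1}{r}\partial_r u + r^{-2}\tilde\Delta u$; at $r=1$ with $\partial_r u=0$ and $u|_{S^{d-1}}=f$ this becomes $\Delta_{g_0}u|_{S^{d-1}} = \partial_r^2 u + \tilde\Delta f$. Substituting,
\begin{align*}
\tfrac{d}{2}\Delta_{g_0}u - 2\,\partial_r^2 u = \tfrac{d}{2}\tilde\Delta f + \tfrac{d-4}{2}\,\partial_{n_{g_0}}^2 u,
\end{align*}
and multiplying by $f$ and integrating over $S^{d-1}$ yields the claimed identity. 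There is no real obstacle here beyond bookkeeping; the only step that needs some care is the differentiation of $\psi^{-4/(d-4)}$ and $\psi^{-d/(d-4)}$ in $r$, where the power of $(d-4)$ in the denominator cancels against $\psi'(1)=-(d-4)/2$ to produce clean coefficients.
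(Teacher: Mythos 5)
Your proposal is correct and follows essentially the same route as the paper: expand $\Delta_{g^*}u$ by the conformal-change formula and $\Delta_{g_0}(\psi u)$ by the product rule, differentiate in $r$, discard the terms killed by the Neumann condition, and then use the polar decomposition $\Delta_{g_0}u|_{r=1}=\partial_r^2 u+\tilde\Delta f$. The only cosmetic difference is that you subtract before substituting for $\Delta_{g_0}u$, whereas the paper substitutes into each piece separately first; the arithmetic and the final identity agree.
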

\begin{proof}
Since $\sqrt{|g^{*}|}=\psi^{\frac{2d}{d-4}}\sqrt{|g_{0}|}$ and $(g^{*})^{-1}\sqrt{|g^{*}|}=\psi^{\frac{2(d-2)}{d-4}}\sqrt{|g_0|}g_{0}^{-1}$, we have
\begin{align*}
&\Delta_{g^{*}} u=\frac{1}{\sqrt{|g^{*}|}}\partial_{i}\left(\sqrt{|g^{*}|}g^{ij}\partial_{j} u\right)\\
&=\frac{2(d-2)}{d-4}\psi^{-\frac{d}{d-4}}\langle\nabla\psi,\nabla  u\rangle_{g_{0}}+\psi^{-\frac{4}{d-4}}\Delta_{g_{0}} u
\end{align*}
and since $\psi$ is radial we obtain
\begin{align*}
\Delta_{g^{*}} u=\frac{2(d-2)}{d-4}\psi^{-\frac{d}{d-4}}\psi^{'}\frac{\partial  u}{\partial r}+\psi^{-\frac{4}{d-4}}\Delta_{g_{0}} u,
\end{align*}
and since the normal direction $n_{g_{0}}$ coincides with the radial direction $\frac{\partial}{\partial r}$ we have 
\begin{align*}
\frac{\partial}{\partial n_{g^{*}}}\Delta_{g^{*}} u=\psi^{-\frac{2}{d-4}}\frac{\partial}{\partial r}\left(\frac{2(d-2)}{d-4}\psi^{-\frac{d}{d-4}}\psi^{'}\frac{\partial  u}{\partial r}+\psi^{-\frac{4}{d-4}}\Delta_{g_{0}} u\right).
\end{align*}
Since $ u$ satisfies a Neumann boundary condition we have
\begin{align*}
\frac{\partial}{\partial n_{g^{*}}}\Delta_{g^{*}} u|_{r=1}=\left(\frac{2(d-2)}{d-4}\psi^{'}\frac{\partial^{2}  u}{\partial r^{2}}-\frac{4}{d-4}\psi^{'}\Delta_{g_{0}} u+\frac{\partial}{\partial r}\Delta_{g_{0}} u\right)|_{r=1},
\end{align*}
and writing 
\begin{align*}
\Delta_{g_{0}} u=\frac{\partial^{2}}{\partial r^{2}} u+\frac{d-1}{r}\frac{\partial}{\partial r} u+r^{-2}\tilde{\Delta} u,
\end{align*}
we obtain
\begin{align*}
\Delta_{g_{0}} u|_{r=1}=\frac{\partial^{2}}{\partial r^{2}} u|_{r=1}+\tilde{\Delta} u|_{r=1},
\end{align*} 
and 
\begin{align*}
\frac{\partial}{\partial n_{g^{*}}}\Delta_{g^{*}} u&=2\psi^{'}|_{r=1}\frac{\partial^{2}}{\partial r^{2}} u|_{r=1}+\psi^{'}|_{r=1}\tilde{\Delta} u+\frac{\partial}{\partial r}\Delta_{g_{0}} u|_{r=1}\\
&=-(d-4)\frac{\partial^{2}}{\partial r^{2}} u|_{r=1}+2\tilde{\Delta} u|_{r=1}+\frac{\partial}{\partial r}\Delta_{g_{0}} u|_{r=1}.
\end{align*}
Observe on the other hand that 
\begin{align}
\Delta_{g_{0}}(\psi  u)= u\Delta_{g_{0}}\psi+2\langle\nabla  u,\nabla\psi\rangle_{g_{0}}+\psi\Delta_{g_{0}} u,
\end{align}
and 
\begin{align*}
2\langle\nabla  u,\nabla\psi\rangle_{g_{0}}=2\psi^{'}\frac{\partial}{\partial r} u.
\end{align*}
Since $\Delta_{g_{0}}\psi$ is a constant and $ u$ satisfies a Neumann boundary condition we obtain
\begin{align}
\partial_{n_{g_{0}}}\left(\Delta_{g_{0}}(\psi  u)\right) u&=\frac{\partial}{\partial r}\left(2\psi^{'}\frac{\partial}{\partial r} u+\psi\Delta_{g_{0}} u\right)|_{r=1},\\
&=3\psi^{'}\frac{\partial^{2}  u}{\partial r^{2}}|_{r=1}+\psi^{'}\tilde{\Delta} u|_{r=1}+\frac{\partial}{\partial r}\Delta_{g_{0}} u|_{r=1}\\
&=-\frac{3}{2}(d-4)\frac{\partial^{2}  u}{\partial r^{2}}|_{r=1}-\frac{d-4}{2}\tilde{\Delta} u|_{r=1}+\frac{\partial}{\partial r}\Delta_{g_{0}} u|_{r=1}.
\end{align}
We have obtained 
\begin{align*}
&\oint_{S^{d-1}}\frac{\partial}{\partial n_{g^{*}}}(\Delta_{g^{*}} u)d\sigma-\oint_{S^{d-1}}\frac{\partial}{\partial n_{g_0}}(\Delta_{g_0}\psi  u)d\sigma\\
&=\oint_{S^{d-1}} u\left(-(d-4)\frac{\partial^{2}}{\partial r^{2}} u|_{r=1}+2\tilde{\Delta} u|_{r=1}+\frac{\partial}{\partial r}\Delta_{g_{0}} u|_{r=1}\right)d\sigma\\
&-\oint_{S^{d-1}} u\left(-\frac{3}{2}(d-4)\frac{\partial^{2}  u}{\partial r^{2}}|_{r=1}-\frac{d-4}{2}\tilde{\Delta} u|_{r=1}+\frac{\partial}{\partial r}\Delta_{g_{0}} u|_{r=1}\right)d\sigma\\
=&\frac{d}{2}\oint_{S^{d-1}}f\tilde{\Delta} fd\sigma+\frac{d-4}{2}\oint_{S^{d-1}} u\frac{\partial^{2}}{\partial r^{2}} u|_{r=1}d\sigma\\
=&\frac{d}{2}\oint_{S^{d-1}}f\tilde{\Delta}fd\sigma+\frac{d-4}{2}\oint_{S^{d-1}}f\frac{\partial^{2}}{\partial n_{g_{0}}^{2}} ud\sigma.
\end{align*}
 \end{proof}
We have shown 
\begin{corollary}\label{I1_I2}
For $u$ satisfying \eqref{udir} and \eqref{uneumann}, the following identity holds 
\begin{align*}
\II_{1}(g^{*})[ u]-\II_{2}(g^{*})[ u]&-\left(\II_{1}(g_{0})[\psi  u]-\II_{2}(g_{0})[\psi  u]\right) \\
&=-2\oint_{S^{d-1}}f\tilde{\Delta}f d\sigma-\frac{d(d-4)^2}{4}\oint_{S^{d-1}}f^{2}d\sigma.
\end{align*}
 \end{corollary}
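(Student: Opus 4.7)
The plan is to obtain the claimed identity as an immediate algebraic consequence of the two preceding lemmas, by forming the difference of their statements. Denoting by (A) the formula for $\II_{1}(g^{*})[u] - \II_{1}(g_{0})[\psi u]$ and by (B) the formula for $\II_{2}(g^{*})[u] - \II_{2}(g_{0})[\psi u]$, the left-hand side of the corollary is exactly (A) $-$ (B).

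When this subtraction is carried out, the integrals involving the second normal derivative $\oint_{S^{d-1}} f\,\frac{\partial^{2} u}{\partial n_{g_{0}}^{2}}\,d\sigma$ appear with coefficient $\frac{d-4}{2}$ in both (A) and (B), and therefore cancel identically. This cancellation is the essential point: it is precisely what allows the right-hand side to depend only on the boundary data $f$, with no residual dependence on the interior behavior of $u$ through its second normal derivative on $S^{d-1}$. For the remaining pieces, the coefficient of $\oint_{S^{d-1}} f\,\tilde{\Delta} f\,d\sigma$ becomes $\frac{d-4}{2} - \frac{d}{2} = -2$, while the term $-\frac{d(d-4)^{2}}{4}\oint_{S^{d-1}} f^{2}\,d\sigma$ arises from (A) alone and is carried through unchanged, yielding exactly the right-hand side in the statement.

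There is no genuine obstacle at this stage, since both lemmas have already been established in the requisite generality: their proofs invoke only the boundary conditions \eqref{udir} and \eqref{uneumann}, together with the explicit form of $\psi = 1 + \tfrac{d-4}{2}\rho$ on $B^{d}$ worked out in Proposition \ref{proppsi}, and in particular do not use the full bi-harmonicity $(P_{4})_{g^{*}} u = 0$. The content of the corollary is thus purely bookkeeping, and its role is to record in compact form the identity that will be combined with the conformal covariance of $P_{4}$ in the next section to relate the Paneitz energy $E_{4}(g^{*})[U_{f}]$ to the bi-Laplace energy $\int_{B^{d}} (\Delta_{g_{0}}(\psi U_{f}))^{2}\,dx$ plus explicit curvature-free boundary terms in $f$ and $\tilde{\nabla} f$.
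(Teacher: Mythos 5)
Your proposal is correct and matches the paper's (implicit) argument: the corollary follows by subtracting the two preceding lemmas, with the $\oint f\,\partial^2_{n}u$ terms cancelling and the remaining coefficients combining as you describe. You also rightly observe that although the $\II_1$-lemma is stated under the full hypotheses \eqref{p40}--\eqref{duf}, its proof uses only $\II_1(g^*)[u]=0$ (a consequence of the Neumann condition) and boundary data, so the corollary does indeed hold in the generality stated, i.e.\ for any $u$ satisfying \eqref{udir} and \eqref{uneumann}.
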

\begin{corollary}\label{cor_energy}
The energy identity \eqref{eqn e4} is equivalent to
\begin{align}\label{alternative_energy}
\II_{1}(g_{0})[\psi U_{f}]-\II_{2}(g_{0})[\psi U_{f}]&=2\oint_{S^{d-1}}fP_{3}fd\sigma-(d-4)\oint_{S^{d-1}}Q_{3}f^{2}d\sigma\\
                                                     &-2\oint_{S^{d-1}}|\tilde{\nabla}f|^2d\sigma+\frac{d(d-4)^2}{4}\oint_{S^{d-1}}f^{2}d\sigma,
                                                     \end{align}
where $Q_{3}$ is given by \eqref{eqn:q_scattering}.
\end{corollary}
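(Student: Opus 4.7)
The strategy is to start from the energy identity \eqref{eqn e4} written in the expanded form \eqref{E1}--\eqref{E2}, kill the terms that are known to vanish for $U_f$, and then substitute the conformal change formula from Corollary \ref{I1_I2}. No new computation is needed; this is purely an algebraic rearrangement of the previous three lemmas.

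First, I would observe that because $U_f$ satisfies $(P_4)_{g^*}U_f=0$ in $B^d$ by \eqref{p40}, the bulk term
\begin{align*}
\int_{B^d} U_f (P_4)_{g^*} U_f \, dV_{g^*}
\end{align*}
on the left-hand side of \eqref{E1} vanishes. Moreover, since $U_f$ satisfies both the Dirichlet condition \eqref{uff} and the Neumann condition \eqref{duf}, Lemma \ref{I3lemma} applies and gives $\II_3(g^*)[U_f]=0$. Therefore \eqref{E1}--\eqref{E2} collapses to
\begin{align*}
\II_1(g^*)[U_f] - \II_2(g^*)[U_f] = 2\oint_{S^{d-1}} f P_3 f \, d\sigma - (d-4) \oint_{S^{d-1}} Q_3 f^2 \, d\sigma.
\end{align*}

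Next, I would apply Corollary \ref{I1_I2} with $u=U_f$, which is legitimate because $U_f$ satisfies \eqref{udir} and \eqref{uneumann}. This yields
\begin{align*}
\II_1(g_0)[\psi U_f] - \II_2(g_0)[\psi U_f] &= \II_1(g^*)[U_f] - \II_2(g^*)[U_f] + 2\oint_{S^{d-1}} f \tilde{\Delta} f \, d\sigma + \frac{d(d-4)^2}{4}\oint_{S^{d-1}} f^2 \, d\sigma.
\end{align*}
Substituting the previous display into this identity gives the right-hand side in terms of $P_3$, $Q_3$, $f\tilde{\Delta}f$, and $f^2$.

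Finally, integrating by parts on the closed manifold $S^{d-1}$ we have
\begin{align*}
\oint_{S^{d-1}} f \tilde{\Delta} f \, d\sigma = -\oint_{S^{d-1}} |\tilde{\nabla} f|^2 \, d\sigma,
\end{align*}
which converts the $f\tilde{\Delta}f$ term to the $|\tilde{\nabla}f|^2$ term appearing in the statement. All the equivalences used are reversible, so the derivation actually shows that \eqref{eqn e4} is equivalent to \eqref{alternative_energy}. There is no real obstacle here; the only subtlety is book-keeping the signs and the factor of $2$ when splitting the energy $E_4(g^*)[U_f]$ as in \eqref{explicitE}, and making sure that the vanishing of the bulk and $\II_3$ terms is invoked at the right moment.
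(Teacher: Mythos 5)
Your proposal is correct and follows exactly the chain the paper has in mind: the paper states Corollary \ref{cor_energy} without a separate proof, expecting the reader to combine \eqref{E1}--\eqref{E2} with the vanishing of the bulk integral (from $(P_4)_{g^*}U_f=0$) and of $\II_3(g^*)[U_f]$ (from Lemma \ref{I3lemma}), then substitute Corollary \ref{I1_I2} and finally integrate by parts to turn $\oint f\tilde\Delta f\,d\sigma$ into $-\oint|\tilde\nabla f|^2\,d\sigma$ --- precisely the steps you carry out, including the observation that $\psi|_{S^{d-1}}=1$ implicitly lets one use $d\sigma$ in place of $d\sigma_{g^*}$.
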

\section{Proof of Theorem \ref{dge4}}\label{proofdge4}
We first assert that there is a class of pseudo differential operators of fractional order $ 2 \gamma \leq d-1$  that are intrinsically defined on
$S^{d-1}$ with the conformal covariant property.  The formula was explicitly computed by Branson \cite{branson95}. In the special case when $2 \gamma =3$, we denote 
the operator as $\PP_{3}$, the formula of $\PP_{3}$ is given as:        
\begin{align}\label{defPP3}
 \PP_{3}=(B-1)B(B+1),
\end{align}
where $B=\sqrt{-\tilde{\Delta}+\left(\frac{d-2}{2}\right)^2}$. Note that $B$ (and therefore $\PP_{3}$) is completely determined on spherical harmonics by 
\begin{align*}
B\xi^{k}=\left(k+\frac{d-2}{2}\right)\xi^k, 
\end{align*}
where $\xi^{k}$ is a spherical harmonic of order $k$. Observe also that for the rough laplacian $\tilde{\Delta}$ with respect to the round metric $g_{S^{d-1}}$ we have 
\begin{align*}
\tilde{\Delta}\xi^{k}=-k(k+d-2)\xi^{k}.
\end{align*} 
The operator $\PP_{3}$ plays a role in the formulation of the following sharp Moser-Trudinger and Sobolev inequalities proved in \cite{be93}. Before stating the results in \cite{be93}, 
we adopt a convention about normalized measures intended to simplify the proof of Lemma \ref{lemdN} and Theorem \ref{dge4}. 
\begin{convention}\em{
We will use the following convention for normalized measures:
\begin{itemize}
\item On $S^{d-1}$, $d\sigma$ will be the unnormalized Lebesgue 
measure with respect to the spherical metric $g_{S^{d-1}}$. We will 
use $d\xi$ to denote the normalized measure 
\begin{align}\label{xinorm}
d\xi=\frac{dV_{g_{S^{d-1}}}}{|S^{d-1}|}=\frac{d\sigma}{\frac{2\pi^{d/2}}{\Gamma(d/2)}},
\end{align}
where $|S^{d-1}|$ is the volume of $S^{d-1}$ with respect to the metric $g_{S^{d-1}}$. 
\item $dx$ will denote the Lebesgue measure on $B^{d}$ with 
respect to the Euclidean metric and $d\mu$ will be the measure
\begin{align}\label{munorm}
d\mu=\frac{dx}{|B^{d}|},
\end{align}
where $|B^{d}|$ is the volume of the $d$-ball $B^{d}$ with respect to the Euclidean metric, note that $d |B^{d}| = |S^{d-1}|$. 
\end{itemize}
}
\end{convention}
We observe that from the definition of the normalized measures $d\xi$ and $d\mu$ in \eqref{xinorm} and \eqref{munorm} respectively, the measure $|S^{d-1}|^{-1}dV_{g_{0}}=\frac{dx^2}{|S^{d-1}|}$ is such that
\begin{align*}
\left(\left|S^{d-1}\right|^{-1}dV_{g_{0}}\right)\big{|}_{S^{d-1}}=d\xi|_{\partial B^4},
\end{align*}
and also
\begin{align}\label{normalizeddx}
\frac{dV_{g_{0}}}{|S^{d-1}|}=\frac{|B^{d}|}{|S^{d-1}|}d\mu=\frac{1}{d}d\mu.  
\end{align}
We now state the main inequalities in \cite{be93} that we are interested in.   
\begin{theorem}[\cite{be93}]\label{thmP3} Let $f\in C^{\infty}(S^{d-1})$ and let $\PP_{3}$ be the operator defined on $S^{d-1}$ by \eqref{defPP3}. Then 
\begin{enumerate}
\item[(a)] If $d=4$ and $\varphi\in C^{\infty}(S^{3})$ then
\begin{align*}
\log\left(\oint_{S^{3}} e^{\varphi-\bar{\varphi}}d\xi\right)\le\frac{1}{2(3)!}\oint_{S^{3}}\varphi\PP_{3}\varphi d\xi.
\end{align*}
\item[(b)] If $d>4$ and $f\in C^{\infty}(S^{d-1})$ then
\begin{align*}
a_{d} \left(\oint_{S^{d-1}}|f|^{\frac{2(d-1)}{(d-4)}}d\xi\right)^{\frac{2(d-1)}{(d-4)}}\le \oint_{S^{d-1}}f\PP_{3}f d\xi,
\end{align*}
where $a_{d}=\frac{d(d-2)(d-4)}{8}$. 
\end{enumerate}
\end{theorem}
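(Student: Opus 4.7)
The plan is to follow Beckner's approach in \cite{be93}, leveraging the conformal covariance of $\PP_3$ on $S^{d-1}$ under its Möbius group. First I would record the spectral skeleton: since $B$ acts on the spherical harmonic subspace of degree $k$ by multiplication by $k+(d-2)/2$, the operator $\PP_3=(B-1)B(B+1)$ has eigenvalue $\lambda_k=(k+\tfrac{d-4}{2})(k+\tfrac{d-2}{2})(k+\tfrac{d}{2})$. Hence $\PP_3$ is nonnegative, strictly positive on mean-zero functions when $d>4$, and has constants as its kernel when $d=4$. Its conformal covariance of bidegree $((d-4)/2,(d+2)/2)$ is Branson's general result \cite{branson95}; equivalently, it may be read off from the scattering definition of $P_{2\gamma}$ at $\gamma=3/2$ applied to $(B^d,S^{d-1},g_H)$, as already exploited in Section \ref{adapted}.

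For part (b), I would invert $\PP_3$ on the orthogonal complement of constants and identify the inverse with convolution against a radial kernel of the form $c_d\,|1-\langle\xi,\eta\rangle|^{-(d-4)}$, with $c_d$ pinned down by the Funk--Hecke formula by matching $\lambda_k^{-1}$ on each eigenspace. The sharp Sobolev inequality in (b) is then dual to the sharp Hardy--Littlewood--Sobolev inequality
\begin{equation*}
\oint_{S^{d-1}}\oint_{S^{d-1}} \frac{u(\xi)\,v(\eta)}{|1-\langle\xi,\eta\rangle|^{d-4}}\,d\xi\,d\eta \;\le\; a_d^{-1}\,\|u\|_{L^{2(d-1)/(d+2)}}\|v\|_{L^{2(d-1)/(d+2)}},
\end{equation*}
whose extremals are precisely the conformal-factor family $|1-\langle z_0,\cdot\rangle|^{-(d+2)/2}$ with $z_0\in B^d$. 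The sharpness step is carried out by a competing-symmetries device (Lieb's argument of alternating symmetric decreasing rearrangement on $S^{d-1}$ with pullback under Möbius translations toward a pole), which drives any maximizing sequence into a single conformal orbit and locks in the constant $a_d=d(d-2)(d-4)/8$.

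For part (a), I would obtain the exponential inequality on $S^3$ as the critical-exponent limit of (b). Substituting $f = 1 + \tfrac{d-4}{2(d-1)}(\varphi-\bar\varphi) + o(d-4)$ into (b) in dimension $d=4+\varepsilon$ and sending $\varepsilon\downarrow 0$ degenerates the power Sobolev inequality into a logarithmic one, with $\lambda_k/[k(k+1)(k+2)]\to 1$ producing the constant $2(3)!$ on the right-hand side, and with the extremal family $|1-\langle z_0,\xi\rangle|^{-(d-4)/2}$ degenerating into $-\log|1-\langle z_0,\xi\rangle|$ that will later surface in Theorem \ref{deq4}. Equivalently, one can run Branson's argument directly on $S^3$, using that $\PP_3$ annihilates constants and that the dual logarithmic HLS inequality on $S^3$ admits the same conformal-symmetrization proof.

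The main obstacle is the sharpness and extremal identification at the HLS step; this requires both the conformal-competing-symmetries argument (or Lieb's rearrangement-plus-inversion method) and careful bookkeeping of the Funk--Hecke constants to pin down $a_d$ and $c_d$. Once (b) is established with the correct extremals, (a) follows from the $\varepsilon\downarrow 0$ limit described above; alternatively, a purely spectral proof of (b) is possible once one observes that $\lambda_k$ is exactly the eigenvalue of the product of three order-one factors whose extremal is controlled by a single Möbius orbit.
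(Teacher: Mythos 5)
This statement is imported into the paper as a black box: it is attributed directly to Beckner's paper \cite{be93}, and the authors do not reprove it, but simply use it in Sections 4--5 to derive Theorems \ref{dge4} and \ref{deq4}. So there is no ``paper's own proof'' against which to compare; the right comparison is against Beckner's argument, and your outline does in fact track it: the diagonalization of $\PP_3=(B-1)B(B+1)$ on spherical harmonics, the identification of its inverse with a Riesz-type kernel via Funk--Hecke, duality with the sharp Hardy--Littlewood--Sobolev inequality on the sphere, a competing-symmetries (Lieb-type rearrangement plus M\"obius inversion) argument for sharpness and extremal identification, and a dimensional continuation $d\downarrow 4$ to pass from the power inequality (b) to the logarithmic inequality (a). That is the correct road map, and the eigenvalue $\lambda_k=(k+\tfrac{d-4}{2})(k+\tfrac{d-2}{2})(k+\tfrac d2)$ with $\lambda_0=a_d$ is the right anchor for pinning down the constant.

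One concrete inaccuracy worth flagging: your HLS kernel $|1-\langle\xi,\eta\rangle|^{-(d-4)}$ has the exponent off by a factor of two. The Green's function of $\PP_3$ (i.e., of the order-$3$ GJMS operator on $S^{d-1}$) is a multiple of the chordal-distance power $|\xi-\eta|^{-(d-4)}$, and since $|\xi-\eta|^2=2(1-\langle\xi,\eta\rangle)$ on the unit sphere, the kernel in the variable $1-\langle\xi,\eta\rangle$ is $(1-\langle\xi,\eta\rangle)^{-(d-4)/2}$. The same caveat applies to the exponents on the extremal families you wrote down, which should be cross-checked against the Funk--Hecke bookkeeping (and against the paper's own extremal formula $|1-\langle z_0,\xi\rangle|^{(4-d)/4}$ in Theorem \ref{dge4}). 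These are bookkeeping corrections rather than structural ones; the proposal's architecture is sound and matches the source the paper cites.
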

The operator $\PP_{3}$ above stated is closely related to the scattering operator $P_{3}$ mentioned in Proposition \ref{properties}, in fact, Branson proved the following
\begin{theorem}[\cite{bra93}]\label{bransonthm}
On the model space $(B^d,S^{d-1},g_{0})$ for $d>4$ and for any $g$ in the same conformal class of $g_{0}$, we have 
\begin{enumerate} 
 \item $P_{3}=\PP_{3}$.
 \item $Q_{3}(g)= \frac {2}{d-4} \PP_{3}(1)=\frac{d(d-2)}{4}$. 
\end{enumerate}
 \end{theorem}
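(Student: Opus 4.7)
The plan is to prove both claims by reducing them to a spectral computation on spherical harmonics. Since $P_3$ (the scattering operator of the Poincar\'e--Einstein ball) and $\PP_3=(B-1)B(B+1)$ are both $O(d)$-equivariant pseudodifferential operators on $S^{d-1}$, each acts diagonally on the decomposition $L^2(S^{d-1})=\bigoplus_k \har_k$ into eigenspaces of degree-$k$ spherical harmonics. Once I verify they share the same eigenvalue on every $\har_k$, claim (1) follows, and claim (2) is then immediate by evaluating $\PP_3$ on the constant function.

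First I would compute the eigenvalues of $P_3$ directly via scattering. Take the geodesic defining function $r$ adapted to the round representative $g_{S^{d-1}}$, so near infinity the hyperbolic metric has the Fefferman--Graham form $g_+=r^{-2}(dr^2+h_r)$ with $h_0=g_{S^{d-1}}$. For $f=Y_k\in\har_k$, substitute the separated ansatz $u(r,\omega)=w(r)Y_k(\omega)$ into the Poisson equation \eqref{eqn:poisson_equation} and use $-\tilde{\Delta}Y_k=k(k+d-2)Y_k$. The resulting radial ODE is equivalent to a hypergeometric equation with indicial roots $s$ and $n-s$ at $r=0$; its two Frobenius solutions can be written in closed form in terms of hypergeometric functions. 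Matching them against the expansion \eqref{eqn:poisson_asymptotics} and reading off the ratio $H|_M/F|_M$ at the boundary yields the scattering eigenvalue as an explicit ratio of Gamma functions. Applying the Graham--Zworski normalization $P_{2\gamma}=d_\gamma S(n/2+\gamma)$ with $d_\gamma=2^{2\gamma}\Gamma(\gamma)/\Gamma(-\gamma)$ and simplifying via the reflection formula then gives the intrinsic spherical formula
\begin{align*}
P_{2\gamma}Y_k=\frac{\Gamma(k+n/2+\gamma)}{\Gamma(k+n/2-\gamma)}\,Y_k .
\end{align*}

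Next I specialize: with $n=d-1$, $\gamma=3/2$, and $\lambda_k:=k+(d-2)/2$ so that $BY_k=\lambda_k Y_k$, the identity $\Gamma(\lambda_k+2)=(\lambda_k+1)\lambda_k(\lambda_k-1)\Gamma(\lambda_k-1)$ collapses the Gamma ratio to
\begin{align*}
P_3Y_k=(\lambda_k-1)\lambda_k(\lambda_k+1)Y_k=\PP_3Y_k,
\end{align*}
which gives (1) for the round representative. The identity extends to any $g$ in the conformal class via the conformal covariance \eqref{eqn conformal covariant}. For (2), I evaluate $\PP_3$ on the constant function ($k=0$, so $\lambda_0=(d-2)/2$):
\begin{align*}
\PP_3(1)=\frac{d-4}{2}\cdot\frac{d-2}{2}\cdot\frac{d}{2}=\frac{d(d-2)(d-4)}{8},
\end{align*}
whence $Q_3(g)=\tfrac{2}{d-4}\PP_3(1)=\tfrac{d(d-2)}{4}$.

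The main obstacle is the first step: one must correctly identify the two Frobenius solutions of the radial ODE with the appropriate hypergeometric functions, extract the scattering coefficient on each $\har_k$, and carefully track the normalizing constant $d_\gamma$. After that, everything reduces to Gamma-function algebra and the trivial evaluation on the constant mode. This spectral calculation is classical and is essentially contained in \cite{GZ03} together with Branson's original intertwining formula on the sphere; my proof would simply assemble these ingredients.
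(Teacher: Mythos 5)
Since the paper does not prove this theorem but cites it directly from Branson \cite{bra93}, there is no ``paper proof'' to compare against; what you have given is a self-contained reconstruction, and it is a correct one. Your strategy is the standard one in the scattering-theoretic literature: both $P_3$ and $\PP_3$ are $O(d)$-equivariant, hence diagonal on the spherical harmonic decomposition, so it suffices to match eigenvalues. The key Gamma-ratio formula $P_{2\gamma}Y_k=\Gamma(k+n/2+\gamma)/\Gamma(k+n/2-\gamma)\,Y_k$ on $S^n$ is indeed the content of Branson's intertwining formula and can be derived from the hypergeometric radial analysis on $\mathbb{H}^{n+1}$ as you describe, and your algebra from there is correct: with $n=d-1$, $\gamma=3/2$, $\lambda_k=k+(d-2)/2$, the ratio collapses to $(\lambda_k-1)\lambda_k(\lambda_k+1)$, matching $\PP_3=(B-1)B(B+1)$, and the constant-mode evaluation gives $\PP_3(1)=d(d-2)(d-4)/8$, hence $Q_3=d(d-2)/4$. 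As a sanity check, the same Gamma-ratio formula with $\gamma=2$ reproduces the eigenvalues of the $P_4$ expression quoted in the paper's introduction.

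One small clarification worth flagging: your remark that ``the identity extends to any $g$ in the conformal class via conformal covariance'' is correct as an operator identity for (1), but the constancy $Q_3(g)=d(d-2)/4$ in (2) is specific to representatives of the boundary conformal class whose induced metric is round (the relevant case here being $g^*$, since $g^*|_{S^{d-1}}=g_{S^{d-1}}$); for a general conformal rescaling the $Q$-curvature transforms nontrivially. This imprecision is inherited from the theorem statement itself and does not affect the paper's use of the result.
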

We will now start the proof of Theorem A by first justifying the boundary condition \eqref{dNeumann} imposed in the statement of the theorem.
\begin{lemma} \label{lemdN}
A function $u$ satifies \eqref{udir} and \eqref{uneumann} if and only if the function $v = \psi u$ is in $\ff_{f}$, where $\ff_{f}$ denotes 
the class  of functions
\begin{align*}
 \ff_{f} =\left\{ v \in C^\infty (B^d):v =f~\text{on}~S^{d-1},~\text{and}~ \frac{ \partial v}{\partial n_{g_0} } = - \frac {d-4}{2} f ~\text{on}~ S^{d-1}\right\}. 
\end{align*}
\end{lemma}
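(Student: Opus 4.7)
The proof is essentially a direct computation using the explicit form of $\psi$ from Proposition \ref{proppsi}. My plan is to extract from $\psi = 1 + \frac{d-4}{2}\rho$, with $\rho = \frac{1-|x|^2}{2}$, the two boundary data I need: evaluating at $r=|x|=1$ gives $\psi|_{S^{d-1}} \equiv 1$, while differentiating in the radial direction gives $\partial_r \psi = -\frac{d-4}{2}r$, so $\partial_r \psi|_{S^{d-1}} = -\frac{d-4}{2}$. Since $g^{*} = \psi^{\frac{4}{d-4}} g_0$ and $\psi \equiv 1$ on the boundary, the metrics $g^{*}$ and $g_0$ induce the same metric on $S^{d-1}$ and have the same outward unit normal there; in particular the two normal derivatives $\partial/\partial n_{g^{*}}$ and $\partial/\partial n_{g_0}$ coincide as operators when evaluated on $S^{d-1}$, and both equal $\partial/\partial r|_{r=1}$.

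For the forward implication, assume $u$ satisfies \eqref{udir} and \eqref{uneumann}, and set $v = \psi u$. Restricting to $S^{d-1}$ gives $v|_{S^{d-1}} = \psi|_{S^{d-1}} \cdot u|_{S^{d-1}} = 1 \cdot f = f$. Applying the Leibniz rule for the Euclidean normal derivative,
\begin{equation*}
\frac{\partial v}{\partial n_{g_0}}\bigg|_{S^{d-1}} = \frac{\partial \psi}{\partial n_{g_0}}\bigg|_{S^{d-1}} u|_{S^{d-1}} + \psi|_{S^{d-1}} \frac{\partial u}{\partial n_{g_0}}\bigg|_{S^{d-1}} = -\frac{d-4}{2}\,f + 1\cdot 0 = -\frac{d-4}{2}\,f,
\end{equation*}
where the second term vanishes because $\frac{\partial u}{\partial n_{g_0}}|_{S^{d-1}} = \frac{\partial u}{\partial n_{g^{*}}}|_{S^{d-1}} = 0$ by the identification of the normals and \eqref{uneumann}. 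Hence $v \in \ff_f$.

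For the converse, given $v \in \ff_f$, define $u = v/\psi$, which is smooth in a neighborhood of $S^{d-1}$ since $\psi > 0$ there. Then $u|_{S^{d-1}} = f/1 = f$, yielding \eqref{udir}. Computing the normal derivative by the quotient rule and evaluating on $S^{d-1}$,
\begin{equation*}
\frac{\partial u}{\partial n_{g^{*}}}\bigg|_{S^{d-1}} = \frac{\partial u}{\partial n_{g_0}}\bigg|_{S^{d-1}} = \frac{\psi\,\partial_{n_{g_0}}v - v\,\partial_{n_{g_0}}\psi}{\psi^2}\bigg|_{S^{d-1}} = -\frac{d-4}{2}f - f\cdot\left(-\frac{d-4}{2}\right) = 0,
\end{equation*}
which is precisely \eqref{uneumann}. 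The main (and only) subtlety is keeping track of the fact that the two unit normals coincide on $S^{d-1}$ because $\psi \equiv 1$ there; once that is noted, the rest is a one-line product-rule calculation, so I anticipate no real obstacle.
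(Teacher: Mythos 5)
Your proof is correct and follows essentially the same approach as the paper's one-line argument, simply spelling out the product-rule computation and the identification $n_{g^*} = n_{g_0}$ on $S^{d-1}$ that the authors leave implicit. The only minor point worth noting is that $\psi = 1 + \frac{d-4}{2}\rho > 0$ on all of $\overline{B^d}$ (not just near the boundary), so $u = v/\psi$ is smooth globally; but this does not affect the argument.
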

\begin{proof}[Proof of Lemma \ref{lemdN}] This follows directly from the properties of the function $\psi$, which equals one 
 and whose Neumann derivative equals $ - \frac{d-4}{2} $ on the boundary $S^{d-1}$ of 
the Euclidean ball $B^d$.
\end{proof}
\vskip .1in
\begin{proof}[Proof of Theorem \ref{dge4}]  We start by noting that since $Q_{3}(g^{*})=\frac{d(d-2)}{4}$, we can rewrite the energy identity \eqref{alternative_energy} as
\begin{align*}
\oint_{S^{d-1}}fP_{3}fd\sigma=&\frac{1}{2}\II_{1}(g_{0})[\psi U_{f}]-\frac{1}{2}\II_{2}(g_{0})[\psi U_{f}]+\oint_{S^{d-1}}|\tilde{\nabla} f|^2d\sigma\\
&+\frac{d(d-4)}{4}\oint_{S^{d-1}}|f|^{2}d\sigma,
\end{align*} 
and by Theorem \ref{bransonthm} we see that
\begin{align}\label{P3identity}
 \oint_{S^{d-1}}fP_{3}fd\sigma=\oint_{S^{d-1}}f\PP_{3}f d\sigma.
\end{align}
On the other hand, if we set $v_f=\psi U_{f}$ we see that $v_f$ is a biharmonic extension of $f$ to $B^{4}$ and $v_f$ is in $\ff_{f}$,  
Since $v_f$ is bi-harmonic, we have
\begin{align*}
\frac{1}{2}\II_{1}(g_{0})[\psi U_{f}]-\frac{1}{2}\II_{2}(g_{0})[\psi U_{f}]&=\frac{1}{2}\II_{1}(g_{0})[v_f]-\frac{1}{2}\II_{2}(g_{0})[v_f]\\
&=\frac{1}{2}\int_{B^{4}}(\Delta_{g_{0}}v_f)^{2}dx.
\end{align*}
from Theorem \ref{thmP3} part (b), \eqref{P3identity} and \eqref{normalizeddx} we obtain 
\begin{align}
a_{d}\left(\oint_{S^{d-1}}|f|^{\frac{2(d-1)}{d-4}}d\xi\right)^{\frac{d-4}{2(d-1)}}&\le\oint_{S^{d-1}}fP_{3}fd\xi\nonumber\\
&=\frac{1}{2d}\int_{B^{4}}(\Delta_{g_{0}} v_f)^{2}d\mu+\oint_{S^{d-1}}|\tilde{\nabla} f|^2 d\xi \nonumber\\
&+\frac{d(d-4)}{4}\oint_{S^{d-1}}f^2d\xi.\label{ineA}
 \end{align}
Thus for a general function $v \in \ff_{f}$, 
it is clear that $\displaystyle{\int_{B^{4}}(\Delta_{g_{0}}v)^{2}d\mu}$ is minimized by bi-harmonic functions 
in $\ff_{f}$ which yields the desired inequality. The assertion on the functions for which equality 
in \ref{mainineqge4} is attained follows from an observation made in \cite{be93} where it is proved that equality in Theorem \ref{thmP3} 
part (b) is attained for functions of the form 
$f(\xi)=c|1-\langle z_{0},\xi\rangle|^{\frac{4-d}{4}}$ where $c$ is a constant. We remark that for $v \in \ff_{f}$, inequality \eqref{ineA} is
the same as \eqref{mainineqge4} in the statement of Theorem \ref{dge4} after changing the notation $d \xi$, $d\mu$ back to $d\sigma$, $dx$. 
\end{proof}

\section{Proof of Theorem \ref{deq4}}\label{proofeq4}
 Recall that  when $d=4$, the metric $g^*$ is given by $g^*=e^{2\rho}g_{0}$ where $\rho(x)=\frac{1-|x|^2}{2}$, and therefore, for the Paneitz operator $P_4$ we have 
 \begin{align}\label{P44conf}
 (P_{4})_{g^*}=e^{-4\rho}(\Delta_{g_{0}})^2. 
 \end{align}
Fixing a function $\varphi\in C^{\infty}(S^3)$, recall that we have defined the class $C_{\varphi}$ given by 
\begin{align*}
C_{\varphi}=\left\{w\in C^{\infty}(B^4):w|_{S^3}=\varphi,\frac{\partial}{\partial n}w|_{S^3}=0\right\}.
\end{align*}  
 
Assuming that $w_{\varphi}$ is a bi-harmonic function in $C_{\varphi}$, we first recall that the energy identity \eqref{eqn e4}  in the case $d=4$ is of the form
\begin{align}\label{energyd=4}
E_{4}(g^*)[w_{\varphi}]=2\oint_{S^3}\varphi P_{3}\varphi d\sigma.
\end{align}
Using the identities \eqref{E1} and \eqref{E2} together with the proof of Lemma \ref{I3lemma} we reach the conclusion that \eqref{energyd=4} is equivalent to
\begin{align}
-\II_{2}(g^*)[w_{f}]=2\oint_{S^3}\varphi P_{3}\varphi d\sigma.\label{new4energy}
\end{align}
We now compute the difference $\II_{2}(g_{*})[w_{\varphi}]-\II_{2}(g_{0})[w_{\varphi}]$. We start by noting that for any $w$ in $C_{\varphi}$ we have 
\begin{align*}
\frac{\partial}{\partial n_{g^{*}}}(\Delta_{g^{*}}w)|_{S^{3}}&=\frac{\partial}{\partial n_{g_{0}}}(\Delta_{g_{0}}w)|_{S^{3}}+2\Delta_{g_{0}}w-2\frac{\partial^2}{\partial n^2_{g_{0}}}w|_{S^{3}}\\
 &=\frac{\partial}{\partial n_{g_{0}}}(\Delta_{g_{0}}w)|_{S^{3}}+2\tilde{\Delta}\varphi,
\end{align*}
thus \eqref{new4energy} takes the form
\begin{align}
-\II_{2}(g_{0})[w_{\varphi}]-2\oint_{S^{3}}\varphi\tilde{\Delta}\varphi d\sigma=2\oint_{S^{3}}\varphi P_{3}\varphi d\sigma.                                          
\end{align}
Integrating by parts we obtain 
\begin{align}
\int_{B^{4}}(\Delta_{g_{0}} w_{\varphi})^{2}dV_{g_{0}}+2\oint_{S^{d-1}}|\tilde{\nabla}\varphi|^{2}d\sigma=2\oint_{S^{d-1}}\varphi P_{3}\varphi d\sigma. 
\end{align}
We now prove Theorem \ref{deq4}.
\begin{proof}[Proof of Theorem \ref{deq4}]
Normalizing the measure $d\sigma$ to obtain $d\xi$ and assuming that $\bar{\varphi}=0$ we see from Theorem \ref{thmP3} part (a) that 
\begin{align*}
\log\left(\oint_{S^3}e^{3\varphi}d\xi\right)\le\frac{9}{2(3)!}\oint_{S^{3}}\varphi P_{3}\varphi d\xi=\frac{1}{4(4!)}\int_{B^{4}}(\Delta_{g_{0}} w_{\varphi})^2d\mu+\frac{9}{2(3)!}\oint_{S^{3}}|\tilde{\nabla} \varphi|^2d\xi, 
\end{align*}
and clearly $w_{\varphi}$ minimizes $\int_{B^{4}}(\Delta_{g_{0}}w_{\varphi})^2d\mu$ in the class $C_{\varphi}$, and this yields the desired inequality. As in the proof of Theorem \ref{dge4}, the assertion about the functions for which equality is attained in \eqref{mainineq4} follows from 
the form of the functions for which equality is achieved in Theorem \ref{thmP3} part (a). As proved by Beckner in \cite{be93}, all of these functions are of the form 
$\varphi=-\log\left|1-\langle z_{0},\xi\rangle\right|+c$ where $c$ is a constant, $\xi\in S^{3}$ and $|z_{0}|<1$ is fixed.  
 \end{proof}
\section{Proof of Theorem \ref{thmFD}}\label{pfthmFD}
We start by describing the term ${\bf I}_{2}[g,w]$ appearing in \eqref{11terms} explicitly, however,  before writing out the formula proved in \cite{cqI}, we need to describe important boundary analogues of the Paneitz operator and of $Q$-curvature for compact manifolds with boundary in dimension $4$.
Consider a manifold with boundary $(M^{4},\partial M,g)$ and let $\tilde{g}$ be the restriction of $g$ to $\partial M$. Let $n$ be the unit \emph{outward} normal and let us use  greek letters to denote directions tangent to the boundary, i.e., $g_{\alpha\beta}=\tilde{g}_{\alpha\beta}$. Observe that with this convention the second fundamental form is given by 
\begin{align}
\LL_{\alpha\beta}=\frac{1}{2}\frac{\partial}{\partial n}g_{\alpha\beta},
\end{align} 
and let us use $H$  to denote the mean curvature, i.e., $H=g^{\alpha \beta}\LL_{\alpha \beta}$. The boundary operator $P^{b}_{3}$ defined in \cite{cqI} is
\begin{align}\label{P3b}
P^{b}_{3}w=-\frac{1}{2}\frac{\partial}{\partial n}\Delta_{g}w-\tilde{\Delta}\frac{\partial}{\partial n}w-\frac{2}{3}H\tilde{\Delta}w+\LL^{\alpha\beta}\tilde{\nabla}_{\alpha}\tilde{\nabla}
_{\beta}w+\frac{1}{3}\tilde{\nabla}H\cdot\tilde{\nabla}w+(F-2J)\frac{\partial}{\partial n}w,
\end{align}
where $\tilde{\Delta},\tilde{\nabla}$ are the boundary counterparts of $\Delta_{g},\nabla$. Note that this time the function $w$ is defined in all of $\overline{B}^{4}$. The convention that we use for the Laplacian is that  $\Delta_{g}w$ stands for
\begin{align*}
\Delta_{g}w=\nabla_{n}\nabla_{n}w+g^{\alpha\beta}\nabla_{\alpha}\nabla_{\beta}w,
\end{align*} 
where $\nabla_{n}\nabla_{n}w$ means two covariant derivatives with respect to the metric $g$ in the normal direction $n$. The terms $F$ and $J$ and \eqref{P3b} are used to denote
\begin{itemize}
\item $F=\ric_{nn}$,
\item $J=\frac{R_{g}}{6}|_{\partial M}$, i.e., the restriction to the boundary of the ambient scalar curvature $R_{g}$.    
\end{itemize}  
We also have a boundary curvature $T_{3}$ given by 
\begin{align}\label{T3}
T_{3}=-\frac{1}{2}\frac{\partial}{\partial n}J+JH-\langle G,\LL\rangle +\frac{1}{9}H^{3}-\frac{1}{3}\tr_{\tilde{g}}(\LL^{3})+\frac{1}{3}\tilde{\Delta}H
 \end{align}
 where 
 \begin{itemize}
 \item $G_{\beta}^{\alpha}=R^{\alpha}_{n\beta n}$,
 \item $\langle G,\LL\rangle=R_{\alpha n\beta n}\LL_{\alpha\beta}$,
 \item $\LL^{3}_{\alpha\beta}=\LL_{\alpha\gamma}\LL^{\gamma\delta}\LL_{\delta\beta}$.
 \end{itemize}
For the model case $(B^{4},S^{3},ds^{2}_{\R^{4}})$ we easily see that 
 \begin{align}
 \LL&=g_{S^{3}},\\
 H&=3,
 \end{align}
and  $\langle G,\LL\rangle=F=J=0$ so that if $n$ is the outer normal to $S^{3}$ with respect to $g_{0}=ds^{2}$
\begin{align}\label{P3bflat}
P_{3}^{b}=-\frac{1}{2}\frac{\partial}{\partial n}\Delta_{g_{0}}-\tilde{\Delta}\frac{\partial}{\partial n}-\tilde{\Delta}
\end{align}
and 
\begin{align}
T_{3}=\frac{1}{9}H^{3}-\frac{1}{3}\tr(\LL^{3})=2. 
\end{align}
The formula proved by Chang and Qing in \cite{cqI} is the following

\begin{align}\label{I2formula}
{\mathbf I}_{2} [g, w]&=b_{2}[g_{0},w]+\frac{1}{12}D[g_{0},w],
 \end{align}
 where 
 \begin{align}\label{b2}
 b_{2}[g,w]=\frac{1}{4}\int_{B^{4}}w(P_{4})_{g}wdV_{g}+\frac{1}{2}\int_{B^{4}}wQ_{g}dV_{g}+\frac{1}{2}\int_{S^{3}}wP^{b}_{3}wd\sigma+\int_{S^{3}}wT_{3}d\sigma,
 \end{align}
 and \footnote{Note that we are using a different sign convention than the one used by Chang and Qing in \cite{cqI,cqII} since they consider the inner unit normal and not the outer normal.}
 \begin{align}
 D[g,w]=-\oint_{S^3}(-R_g-\frac{1}{3}H^2+3F)\frac{\partial}{\partial n}wd\sigma-\oint_{S^3}H\frac{\partial^2}{\partial n^2}wd\sigma-\oint_{S^3}H\tilde{\Delta}wd\sigma.
 \end{align}
We have used in \eqref{b2} the same notation employed in \cite{cqI}. For the model case $(B^4,S^3,g_0)$, a simple computation shows that 
\begin{align}\label{Dmodel}
D[g_{0},w]=3\oint_{S^3}\left(\frac{\partial}{\partial n}w-\frac{\partial^2}{\partial n^2}w\right).
\end{align} 
 
 It is well known that the Paneitz operator $(P_{4})_{g}$ and Q-curvature $(Q_4)_{g}$ have conformal covariance properties, but it turns out 
that $P^{b}_{3}$ and $T_{3}$ are also conformally covariant. More precisely, letting $g_{\tau}=e^{2\tau}g$,we have the transformation laws
\begin{enumerate}
\item ${(P^{b}_{3})}_{g _\tau} = e^{-3\tau} {(P^{b}_{3})}_{g}$,
\item ${(P^{b}_{3})}_{g} \tau+ (T_{3})_{g} = (T_3)_{g_\tau} e^{3\tau}$.
\end{enumerate}
Applying the conformal covariance properties above  to the pair of metrics $g_{0}$ and $g^*=e^{2\rho}g_{0}$, we easily obtain the following
\begin{lemma}\label{PTg*}
The metric $g^*$ has the following properties
\begin{enumerate}
\item $(Q_{4})_{g^*}=(Q_{4})_{g_0}=0$,
\item $(T_{3})_{g^*}=(T_{3})_{g_0}=2$,
\end{enumerate}
hence
\begin{align}\label{invarianceb2}
b_{2}[g^*,w]=b_{2}[g_{0},w],
\end{align}
for all $w\in C^{\infty}(B^4)$. 
\end{lemma}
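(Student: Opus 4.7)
The plan is to establish the two pointwise identities separately and then deduce the $b_2$ identity by a termwise comparison in formula \eqref{b2}.

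For part (1), the vanishing $(Q_4)_{g^*}\equiv 0$ is precisely statement (ii) of Proposition \ref{properties}, so no new argument is needed. The vanishing $(Q_4)_{g_0}\equiv 0$ is immediate from formula \eqref{Q4}: the Euclidean metric is flat, so $J_{g_0}=0$ and $A_{g_0}=0$, and all three terms in \eqref{Q4} drop out.

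For part (2), the value $(T_3)_{g_0}=2$ follows by direct substitution into \eqref{T3}. All ambient curvatures of $g_0$ on $B^4$ vanish, and on $S^3$ the boundary data satisfy $\LL=g_{S^3}$ and $H=3$ (constant), so only the purely extrinsic pieces survive, yielding $\tfrac{1}{9}H^3-\tfrac{1}{3}\tr(\LL^3)=3-1=2$. To pass from $g_0$ to $g^*=e^{2\rho}g_0$, I would invoke the conformal covariance rule (2) for the pair $(P^b_3,T_3)$ stated just before the lemma, taking conformal factor $\tau=\rho$. On $S^3$ we have $\rho\equiv 0$, hence $e^{3\rho}\equiv 1$, and this rule collapses to $(T_3)_{g^*}=(P^b_3)_{g_0}\rho+(T_3)_{g_0}$ along the boundary. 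Thus the whole matter reduces to verifying $(P^b_3)_{g_0}\rho=0$ on $S^3$. Using the flat-model expression \eqref{P3bflat} together with $\rho=\tfrac{1-|x|^2}{2}$, each of the three terms vanishes trivially: $\Delta_{g_0}\rho=-4$ is a constant, so its normal derivative is zero; $\frac{\partial\rho}{\partial n}=-1$ is a constant on $S^3$, so its tangential Laplacian is zero; and $\rho=0$ on $S^3$, so $\tilde{\Delta}\rho$ is zero as well. Hence $(T_3)_{g^*}=2$.

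For the identity \eqref{invarianceb2}, I would compare the four terms of \eqref{b2} in the two metrics. The interior Paneitz term is invariant by \eqref{P44conf}: since $(P_4)_{g^*}=e^{-4\rho}(\Delta_{g_0})^2$ while $dV_{g^*}=e^{4\rho}dV_{g_0}$, the conformal weights cancel, and one gets $w(P_4)_{g^*}w\,dV_{g^*}=w(\Delta_{g_0})^2w\,dV_{g_0}$. The $Q_4$ term vanishes in both metrics by part (1). For the two boundary terms, the decisive observation is that $\rho\equiv 0$ on $S^3$, so $d\sigma_{g^*}=d\sigma_{g_0}$ on the boundary; the covariance rule (1) for $P^b_3$ then gives $(P^b_3)_{g^*}=(P^b_3)_{g_0}$ along $S^3$, and $(T_3)_{g^*}=(T_3)_{g_0}=2$ by part (2). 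Assembling these four observations yields $b_2[g^*,w]=b_2[g_0,w]$. The main obstacle is (2) for $g^*$, which is not quoted from Proposition \ref{properties} and requires the explicit computation of $(P^b_3)_{g_0}\rho$ above; after that step, the rest is a routine bookkeeping exercise using the conformal covariance of the four objects appearing in \eqref{b2}.
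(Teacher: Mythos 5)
Your proof is correct and takes essentially the same route the paper intends: the paper merely states that the lemma is ``easily'' obtained by applying the conformal covariance laws for $(P_4,Q_4)$ and $(P^b_3,T_3)$ together with the fact that $\rho\equiv 0$ on $S^3$, and you have carried out exactly this bookkeeping. The one computational ingredient you supply explicitly, namely $(P^b_3)_{g_0}\rho=0$ on $S^3$, is also the fact the paper itself records (without elaboration) a bit later in the proof of Lemma~\ref{I2property}, and your verification of it from \eqref{P3bflat} is correct.
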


It was pointed out by Chang and Qing in \cite{cqII} that on the model space $(B^{4},S^{3},|dx|^2)$ where again $g_{0}=|dx|^2$ is the Euclidean metric, there is a close relationship between the operator $P^b_{3}$ and Beckner's $\PP_{3}$ operator. In fact, we have 
\begin{lemma}[\cite{cqII}, Lemma 3.3 and Corollary 3.1]\label{P3lemma} Suppose that $w$ satisfies 
\begin{align}
\Delta_{g_{0}}^{2}w=0~\text{on}~B^{4},\label{bilap}\\
w|_{S^3}=\varphi,\label{wbound}
\end{align}
Then 
\begin{align*}
P_{3}^bw|_{S^3}=\PP_{3}\varphi.
\end{align*}
\end{lemma}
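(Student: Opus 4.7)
The plan is to prove the identity by decomposition into spherical harmonics on $S^3$. Observe that both sides of the claimed equality depend linearly on $\varphi$ and are $SO(4)$-equivariant. Moreover, while the lemma statement records only the Dirichlet condition, biharmonic extensions on $B^4$ are not uniquely determined by their trace; in the context in which the lemma will be applied (the class $C_\varphi$), the implicit supplementary condition is the Neumann condition $\partial_n w|_{S^3}=0$, which I will use throughout. With these two boundary conditions the biharmonic extension of each spherical harmonic mode is unique, and it therefore suffices to verify the identity on a single spherical harmonic $Y_k$ of degree $k\ge 0$.

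First I would write down the biharmonic extension of $Y_k$ explicitly. By separation of variables in polar coordinates, and because $r^k Y_k(\theta)$ and $r^{k+2} Y_k(\theta)$ span the biharmonic functions on $B^4$ that are smooth at the origin and angularly of type $Y_k$, the extension has the form $w_k(r,\theta)=(Ar^k+Br^{k+2})Y_k(\theta)$. Imposing $w_k|_{r=1}=Y_k$ and $\partial_r w_k|_{r=1}=0$ forces $A+B=1$ and $kA+(k+2)B=0$, giving
\begin{align*}
w_k(r,\theta)=\frac{(k+2)\,r^k - k\,r^{k+2}}{2}\,Y_k(\theta).
\end{align*}

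Next I would substitute $w_k$ into the flat-model formula \eqref{P3bflat}, which under the Neumann condition collapses to $P_3^b w_k|_{S^3}=-\tfrac12\partial_n\Delta_{g_0} w_k-\tilde\Delta\varphi$. Using the radial decomposition $\Delta_{g_0}=\partial_r^2+\tfrac{3}{r}\partial_r+r^{-2}\tilde\Delta$ together with $\tilde\Delta Y_k=-k(k+2)Y_k$, one computes $\Delta_{g_0} w_k = g(r)Y_k(\theta)$ for an explicit polynomial $g$, differentiates in $r$, and evaluates at $r=1$. The result is $P_3^b w_k|_{S^3}=k(k+1)(k+2)\,Y_k$. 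On the other hand, since $B Y_k=(k+1)Y_k$, the definition \eqref{defPP3} gives $\PP_3 Y_k=(k)(k+1)(k+2)Y_k$, so the two sides agree mode by mode. Linearity and expansion of a general $\varphi\in C^\infty(S^3)$ in spherical harmonics then give the lemma.

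There is no serious analytic obstacle; the only technical nuisance is the radial bookkeeping in computing $\partial_r\Delta_{g_0} w_k$ at $r=1$, which involves evaluating the third radial derivative of the quadratic profile $\tfrac12((k+2)r^k-kr^{k+2})$ and combining it with lower-order terms arising from the $3/r$ and $r^{-2}\tilde\Delta$ pieces of the Laplacian, but everything simplifies cleanly. An alternative route would be to exploit conformal covariance and relate $P_3^b$ on $(B^4,g_0)$ to the scattering operator $P_3$ on the hyperbolic ball, which equals $\PP_3$ by Theorem \ref{bransonthm}; however, the spherical harmonic verification above is more direct and self-contained and requires nothing beyond the flat formula \eqref{P3bflat} and the action of $\tilde\Delta$ and $B$ on spherical harmonics.
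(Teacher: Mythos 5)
The paper does not supply its own proof of this lemma---it is cited from \cite{cqII}---so there is no in-paper argument to compare against; I will evaluate your proof on its merits relative to the statement and the remark that follows it.

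Your computation is correct for the Neumann-zero case: the biharmonic extension of $Y_k$ in $C_\varphi$ is indeed $w_k=\tfrac12\bigl((k+2)r^k-kr^{k+2}\bigr)Y_k$, one finds $\Delta_{g_0}w_k=-2k(k+2)r^kY_k$, and substituting into \eqref{P3bflat} gives
\begin{align*}
P_3^b w_k\big|_{S^3}=-\tfrac12\bigl(-2k^2(k+2)\bigr)Y_k-0-\bigl(-k(k+2)\bigr)Y_k=k(k+1)(k+2)Y_k=\PP_3 Y_k.
\end{align*}
The $SO(4)$-equivariance/linearity reduction to single modes is legitimate, and the spherical-harmonic route is a clean and self-contained way to establish the identity.

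However, there is a genuine gap of scope. You explicitly restrict to biharmonic extensions satisfying $\partial_n w|_{S^3}=0$, treating this as an ``implicit'' hypothesis. The lemma statement records only the Dirichlet condition, and the remark immediately following it in the paper emphasizes that the conclusion holds \emph{regardless} of the Neumann data. Your argument as written does not prove this: with only the Dirichlet condition, the biharmonic extension is not unique, and your mode-by-mode verification covers only one representative from each coset. To close the gap you must additionally show that $P_3^b$ annihilates every biharmonic function with zero trace. This is the same type of computation and works cleanly: the biharmonic extension of $(0,Y_k)$ Dirichlet/Neumann data is $w_1=\tfrac12(r^{k+2}-r^k)Y_k$, for which $\Delta_{g_0}w_1=2(k+2)r^kY_k$, and then
\begin{align*}
P_3^b w_1\big|_{S^3}=-\tfrac12\cdot 2k(k+2)Y_k-\tilde\Delta Y_k-0=-k(k+2)Y_k+k(k+2)Y_k=0.
\end{align*}
Adding this one calculation to your argument upgrades it to the full strength of the lemma as stated. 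Without it, you have proved a strictly weaker statement that happens to suffice for the applications in this paper but does not match what the lemma (and especially the remark) asserts.

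One further minor remark: the alternative route you sketch---relating $P_3^b$ on $(B^4,g_0)$ to the scattering operator $P_3$ via conformal covariance and invoking Theorem~\ref{bransonthm}---is worth noting as closer in spirit to the scattering-theoretic framework of this paper, but your direct verification is equally valid and arguably more transparent, provided the Neumann-independence is handled as above.
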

\begin{remark} \em{We point out that Lemma \ref{P3lemma} holds regardless of the value of the normal derivative of $w$ at $S^3$. 
}
\end{remark}
 As before we will be interested in the class $C_{\varphi}$ of functions $w$ defined on $B^4$   satisfying $w|_{S^3}=f$  and $\frac{\partial}{\partial n}w|_{S^3}=0$.  
In addition to conformal invariance,  the functional $\ii_2$ satisfies the following property for the metrics $g_{0}$ and $g^*=e^{2\rho}g_{0}$:
\begin{lemma}\label{I2property}
For any function $w$ in $C_{\varphi}$ and $g^*=e^{2\rho}g_{0}$ where again $\rho=\frac{(1-|x|^2)}{2}$ we have 
\begin{align*}
\ii_{2}[g^*,w]=\ii_{2}[g_{0},w+\rho].
\end{align*}
\end{lemma}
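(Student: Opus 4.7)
The plan is to evaluate both sides of the identity $\ii_2[g^*,w]=\ii_2[g_0,w+\rho]$ by rewriting everything in terms of $g_0$-quantities using the decomposition $\ii_2[g,w]=b_2[g,w]+\tfrac{1}{12}D[g,w]$ (interpreted for general $g$), and matching them term by term. The key inputs are \eqref{invarianceb2}, the formula \eqref{Dmodel} for $D[g_0,\cdot]$, the explicit form \eqref{P3bflat} of $(P_3^b)_{g_0}$, and Proposition \ref{properties}(i) (which gives that $g^*$ has totally geodesic boundary).

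First I will handle the left-hand side. By \eqref{invarianceb2} one has $b_2[g^*,w]=b_2[g_0,w]$, so only $D[g^*,w]$ remains to be controlled. Since $g^*$ has totally geodesic boundary, $H_{g^*}\equiv 0$ on $S^3$, which kills the two terms of $D[g^*,w]$ containing $H$; moreover, because $g^*|_{S^3}=g_0|_{S^3}$ the outward unit normals agree, so the condition $w\in C_\varphi$ gives $\partial_{n_{g^*}}w=0$, killing the first term of $D[g^*,w]$ as well. Therefore $\ii_2[g^*,w]=b_2[g_0,w]$.

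Next I will handle the right-hand side. Using \eqref{Dmodel} together with $\partial_n w|_{S^3}=0$, $\partial_n\rho|_{S^3}=-1$, and $\partial_n^2\rho=-1$, a direct substitution gives $D[g_0,w+\rho]=-3\oint_{S^3}\partial_n^2 w\, d\sigma$. Then I will expand $b_2[g_0,w+\rho]-b_2[g_0,w]$ as a sum of cross and quadratic terms in $\rho$. The term $\int\rho (P_4)_{g_0}\rho\,dV_{g_0}$ and the term $\int w(P_4)_{g_0}\rho\,dV_{g_0}$ both vanish because $(P_4)_{g_0}=\Delta_{g_0}^2$ and $\Delta_{g_0}\rho\equiv-4$, hence $(P_4)_{g_0}\rho=0$. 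The boundary contributions are controlled by $\rho|_{S^3}=0$, except for $\oint w(P_3^b)_{g_0}\rho\,d\sigma$, which vanishes after a short check using \eqref{P3bflat} and $\Delta_{g_0}\rho\equiv-4$, $\tilde{\Delta}\rho|_{S^3}=0$, $\tilde{\Delta}\partial_n\rho|_{S^3}=0$. The only surviving contribution is the cross term $\tfrac{1}{4}\int_{B^4}\rho\,\Delta_{g_0}^2 w\,dV_{g_0}$, which I compute by integrating by parts twice: using $\Delta_{g_0}\rho=-4$, $\rho|_{S^3}=0$, $\partial_n\rho|_{S^3}=-1$, $\partial_n w|_{S^3}=0$, and $\oint_{S^3}\tilde{\Delta}\varphi\,d\sigma=0$, this integral equals $\oint_{S^3}\partial_n^2 w\,d\sigma$, giving
\begin{equation*}
b_2[g_0,w+\rho]-b_2[g_0,w]=\tfrac{1}{4}\oint_{S^3}\partial_n^2 w\,d\sigma .
\end{equation*}

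Combining, the right-hand side is $b_2[g_0,w]+\tfrac{1}{4}\oint\partial_n^2 w\,d\sigma+\tfrac{1}{12}(-3\oint\partial_n^2 w\,d\sigma)=b_2[g_0,w]$, which matches the left-hand side. The one place that requires care is the bookkeeping of the cross terms in the expansion of $b_2[g_0,w+\rho]-b_2[g_0,w]$: the formula for $b_2$ is not symmetric in the two arguments of the quadratic forms (there is no integration by parts built in), so the interior cross term $\int\rho(P_4)_{g_0}w$ survives even though $\int w(P_4)_{g_0}\rho$ vanishes, and it is precisely this surviving term that produces the $\oint\partial_n^2 w\,d\sigma$ contribution which is then exactly cancelled by the $\tfrac{1}{12}D[g_0,w+\rho]$ term. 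This cancellation is the heart of the lemma and the main thing to verify carefully.
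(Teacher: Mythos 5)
Your proof is correct and follows essentially the same route as the paper's: both reduce $\ii_2[g^*,w]$ to $b_2[g_0,w]$ via the conformal invariance \eqref{invarianceb2} and the vanishing of $D[g^*,w]$ (totally geodesic boundary plus $\partial_{n_{g^*}}w=0$), and both show $\ii_2[g_0,w+\rho]=b_2[g_0,w]$ by isolating the single surviving cross term $\tfrac14\int_{B^4}\rho\,\Delta_{g_0}^2w\,dV_{g_0}=\tfrac14\oint_{S^3}w_{nn}\,d\sigma$ and checking that it cancels against $\tfrac{1}{12}D[g_0,w+\rho]=-\tfrac14\oint_{S^3}w_{nn}\,d\sigma$. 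Your explicit remark about the asymmetry of the $b_2$ pairing (so that $\int\rho\,(P_4)_{g_0}w$ survives while $\int w\,(P_4)_{g_0}\rho$ vanishes) is the right thing to flag and matches what the paper does implicitly.
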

\begin{proof}
Since $(B^4,g^*)$ has a totally  geodesic boundary and $w\in C_{\varphi}$, we  observe that $D[g^*,w]=0$.  It then follows from  Lemma \ref{PTg*} we have
 \begin{align}\label{I2b2}
\ii_{2}[g^*,w]=b_{2}[g^*,w]=b_{2}[g_{0},w]. 
\end{align}
From the expression for $D[g_{0},w]$ in \eqref{Dmodel} we have 
\begin{align}
\ii_{2}[g_{0},w+\rho]&=b_{2}[g_{0},w+\rho]+\frac{1}{12}D[g_{0},w]\nonumber\\
                                   &=b_{2}[g_{0},w+\rho]+\frac{1}{4}\oint_{S^3}(w+\rho)_{n}d\sigma-\frac{1}{4}\oint_{S^3}(w+\rho)_{nn}d\sigma.\label{I2lastline}
\end{align}
We now compute all terms involved in the expression of $b_{2}[g_{0},w+\rho]$. The function $\rho$ satisfies the following equations with respect to the flat metric
\begin{align*}
(P_{4})_{g_{0}}\rho&=0,\\
P^{b}_{3}\rho&=0,
\end{align*}
and from these identities we observe that
\begin{align}
\int_{B^4}(w+\rho)(P_{4})_{g_{0}}(w+\rho)dV_{g_{0}}&=\int_{B^4}w(P_{4})_{g_{0}}w dV_{g_{0}}+\int_{B^4}\rho (P_{4})_{g_{0}}w dV_{g_{0}},\label{4rho}\\
\oint_{S^3}(w+\rho)(P^{b}_{3})_{g_{0}}(w+\rho)d\sigma&=\oint_{S^3}w(P^b_{3})_{g_{0}}w d\sigma.\label{3rho}
\end{align}
Our goal now is to compute the term $\int_{B^4}\rho (P_{4})_{g_{0}}w dV_{g_{0}}$. Integrating by parts we have 
\begin{align}
\int_{B^4}\rho (P_{4})_{g_{0}}w dV_{g_{0}}&=\int_{B^4}\rho (\Delta^2_{g_{0}})w dV_{g_{0}}\nonumber\\
&=\int_{B^4}\left(\Delta_{g_{0}}\rho\right)\left(\Delta_{g_{0}}w\right)dV_{g_{0}}+\oint_{S^3}\rho\frac{\partial}{\partial n}\left(\Delta_{g_{0}}w\right)d\sigma-\oint_{S^3}\frac{\partial\rho}{\partial n}\Delta_{g_{0}}w d\sigma\nonumber\\
&=-4\int_{B^4}\Delta_{g_{0}}wdV_{g_{0}}+\oint_{S^3}\Delta_{g_{0}}wd\sigma,\nonumber \\
&=\oint_{S^3}-4\frac{\partial w}{\partial n}+\Delta_{g_{0}}w d\sigma.\label{rhoPwlast}
\end{align}
Using \eqref{rhoPwlast} together with the expression $\Delta_{g_{0}}|_{S^3}=w_{nn}+3w_{n}+\tilde{\Delta}w$ we conclude that
\begin{align}\label{rhoPw}
\int_{B^4}w\left(P_{4}\right)_{g_0}w dV_{g_{0}}=\oint_{S^3}(w_{nn}-w_{n})d\sigma.
\end{align} 
Combining \eqref{4rho},\eqref{3rho}, Lemma \ref{PTg*} and \eqref{rhoPw} we obtain
\begin{align}\label{b2rho}
b_{2}[g_{0},w+\rho]=b_{2}[g_{0},w]+\frac{1}{4}\oint_{S^3}(w_{n}-w_{nn})d\sigma,
\end{align}
and combining \eqref{b2rho} with  \eqref{I2b2} and \eqref{I2lastline} we conclude that
\begin{align*}
\ii_{2}[g^*,w]=\ii_{2}[g_{0},w+\rho],
\end{align*}
for all $w\in C_{\varphi}$ as needed.
\end{proof}

We now prove Theorem \ref{thmFD}: 

\begin{proof}[Proof of Theorem of \ref{thmFD}] We start by proving part (1). Since $w\in C_{\varphi}$, we have from equation \eqref{invarianceb2} 
\begin{align*}
\ii_{2}[g^*,w]=b_{2}[g_{0},w].
\end{align*}
Recall that for the model space $(B^4,S^3,g_{0})$ and for $w\in C_{\varphi}$ we have 
\begin{align}
(P_{4})_{g_0} w &=\Delta^2_{g_{0}}w,\\
(P^b_{3})_{g_{0}} w &=-\frac{1}{2}\frac{\partial}{\partial n}\Delta_{g_{0}} w -\tilde{\Delta}\varphi,\\
(T_{3})_{g_0}&=2,\\
(Q_4)_{g_0}&=0,
\end{align}
and a simple computation shows that
\begin{align*}
\ii_{2}[g^*,w]=b_{2}[g_{0},w]=\frac{1}{4}\int_{B^4}(\Delta_{g_0} w)^2dV_{g_{0}}+\frac{1}{2}\oint_{S^3}|\tilde{\nabla}\varphi|^2d\sigma+2\oint_{S^3}\varphi d\sigma. 
\end{align*}
From Theorem \ref{deq4} we have the lower bound
\begin{align*}
\ii_{2}[g^*,w]&=\frac{1}{4}\int_{B^4}(\Delta_{g_0} w)^2dV_{g_{0}}+\frac{1}{2}\oint_{S^3}|\tilde{\nabla}\varphi|^2d\sigma+2\oint_{S^3}\varphi d\sigma\\
&\ge\frac{4\pi^2}{3}\log\left(\frac{1}{2\pi^2}\oint_{S^3}e^{3(\varphi-\bar{\varphi})}d\sigma\right)+2\oint_{S^3}\varphi d\sigma,
\end{align*}
and since $\displaystyle{\oint_{S^3}e^{f}d\sigma=|S^3|=2\pi^2}$ we obtain
\begin{align*}
\ii_{2}[g^*,w]\ge -4\pi^2\bar{\varphi}+2\oint_{S^3}\varphi d\sigma=-2\oint_{S^3}\varphi d\sigma+2\oint_{S^3}\varphi d\sigma=0.  
\end{align*}
Now, if $\ii_{2}[g^*,w]=0$ we have 
\begin{align*}
\frac{1}{4}\int_{B^4}(\Delta_{g_0} w)^2dV_{g_{0}}+\frac{1}{2}\oint_{S^3}|\tilde{\nabla}\varphi|^2d\sigma=\frac{4\pi^2}{3}\log\left(\frac{1}{2\pi^2}\oint_{S^3}e^{3(\varphi-\bar{\varphi})}d\sigma\right),
\end{align*}
and from Theorem \ref{deq4} it follows that $\varphi(\xi)=-\log\left|1-\langle z_{0},\xi\rangle\right|+c$ where $z_{0}\in B^4$ is fixed, $\xi\in S^3$, $c$ is a constant and $w$ is a bi-harmonic extension of $\varphi$ in $C_{\varphi}$.\\ 

\indent Part (2) follows from combining part (1) with Lemma \ref{I2property}.
\end{proof}

\bibliography{fractional} 
\end{document}